\newcommand{\e}{\mathrm e}
\newtheorem{InformalTheorem}[theorem]{Informal Theorem}
\newtheorem{InformalClaim}[theorem]{Informal Claim}
\theoremstyle{definition}
\newtheorem{example}[theorem]{Example}
\title{Probabilistic interpretation of the Selberg--Delange~Method in analytic number theory}
\author{Maximilian Janisch\footnote{Institut für Mathematik, Universität Zürich. E-Mail: \url{maximilian.janisch@math.uzh.ch}.}}
\date{January 29, 2025}
\newtheorem{claim}{Claim}
\begin{document}
	\maketitle

    \begin{abstract} 
        In analytic number theory, the Selberg--Delange Method provides an asymptotic formula for the partial sums of a complex function $f$ whose Dirichlet series has the form of a product of a well-behaved analytic function and a complex power of the Riemann zeta function. In probability theory, mod-Poisson convergence is a refinement of convergence in distribution toward a normal distribution. This stronger form of convergence not only implies a Central Limit Theorem but also offers finer control over the distribution of the variables, such as precise estimates for large deviations.
    
        In this paper, we show that results in analytic number theory derived using the Selberg--Delange Method lead to mod-Poisson convergence as $x \to \infty$ for the number of distinct prime factors of a randomly chosen integer between $1$ and $x$, where the integer is distributed according to a broad class of multiplicative functions.

        As a Corollary, we recover a part of a recent result by Elboim and Gorodetsky under different, though related, conditions: A Central Limit Theorem for the number of distinct prime factors of such random integers.
    \end{abstract}
    
	\tableofcontents

    \section{Introduction}
    A classical result in number theory, the Erd\H{o}s-Kac Theorem \cite{ErdosKac}, states that if, for $x\in\N$, $N_x$ is a uniformly distributed random variable on $\set{1,\dots,x}$, then the random variables
    \begin{equation*}
        \frac{\omega(N_x)-\ln \ln x}{\sqrt{\ln \ln x}}
    \end{equation*}
    converge in distribution to a standard normal distribution as $x\to\infty$, where here and henceforth, $\omega(n)$ denotes the number of distinct prime factors of $n\in\N$.

    The notion of \emph{mod-$\phi$ convergence} is a strengthening of convergence in distribution to a normal distribution. It was first introduced in \cite{FMN}, and we give a recall of the Definition of mod-$\phi$ convergence, as well as a special case of it, mod-Poisson convergence, in section \ref{sect:mod-phi}. In \cite[Section 7.2]{FMN}, it was established that the $N_x$ converge mod-Poisson, so that the Erd\H{o}s-Kac Theorem was recovered there as a Corollary.

    In \cite{Elboim-Gorodetsky}, a different but related set of random variables has been studied, based on the notion of \emph{multiplicative functions}.
    \begin{definition}[Multiplicative functions]
        A function $\alpha:\N\to\C$ is called \emph{multiplicative} if, for any co-prime integers $n,m\in\N$, we have
        \begin{equation*}
            \alpha(nm) = \alpha(n) \alpha(m).
        \end{equation*}
    \end{definition}
    Given a multiplicative function $\alpha:\N\to\R_+$ which is not identically equal to $0$, we define, as is done in \cite{Elboim-Gorodetsky}, a family of random variables $(N_{x,\alpha})_{x\in\N}$ by the conditions that $N_{x,\alpha}$ is supported on $\{1,\dots,x\}$, and that $\P(N_{x,\alpha}=k)$ is proportional to $\alpha(k)$.\footnote{More precisely, $\P(N_{x,\alpha}=k)=\frac{\alpha(k)}{\alpha(1)+\alpha(2)+\dots+\alpha(x)}$.}

    This manuscript demonstrates that results derived from the Selberg--Delange Method provide a framework for proving mod-Poisson convergence of the $\omega(N_{x,\alpha})$ for a broad class of multiplicative functions $\alpha$. As a direct consequence, we recover, under different, but related, conditions (see Remark~\ref{rem:conditions in Elboim--Gorodetsky}) the first part of \cite[Theorem 1.1]{Elboim-Gorodetsky}, which establishes a Central Limit Theorem for the $\omega(N_{x,\alpha})$.

    \bigskip
    We will now state the main result of our text.
    \begin{theorem}[Main result, shortened version of Theorem \ref{thm:main}]\label{thm:main-introductory}
    	Let $\alpha:\N\to[0,\infty)$ be a multiplicative function that is not identically equal to $0$. If $\alpha$ is \emph{admissible++}, a term that will be defined later in the text, see Definition~\ref{def:admissible-multiplicative-functions} and Definition~\ref{def:admissible++}, and $(N_{x,\alpha})_{x\in\N}$ is a family of random variables such that $\P(N_{x,\alpha}\in\set{1,\dots, x})=1$ and such that $\P(N_{x,\alpha}=k)$ is proportional to $\alpha(k)$ for $k\in\set{1,\dots, x}$, then the number of distinct prime factors of $N_{x,\alpha}$, denoted by $\omega(N_{x,\alpha})$, converges mod-Poisson (see Definition \ref{def:mod-phi-convergence}) as $x\to\infty$ on the entire complex plane.
    \end{theorem}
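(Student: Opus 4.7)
My plan is to compute the moment generating function of $\omega(N_{x,\alpha})$ directly via the Selberg--Delange method applied to a twisted Dirichlet series, and then to recognize the resulting asymptotic as mod-Poisson convergence. For $z\in\mathbb{C}$, set
\begin{equation*}
S(x,z) := \sum_{n\leq x}\alpha(n)\e^{z\omega(n)},\qquad
\mathbb{E}\bigl[\e^{z\omega(N_{x,\alpha})}\bigr] = \frac{S(x,z)}{S(x,0)}.
\end{equation*}
Because $\alpha$ and $n\mapsto\e^{z\omega(n)}$ are both multiplicative, the twisted Dirichlet series admits the Euler product
\begin{equation*}
F(s,z) = \sum_{n\geq 1}\alpha(n)\e^{z\omega(n)}n^{-s} = \prod_p\Bigl(1+\e^z\sum_{k\geq 1}\alpha(p^k)p^{-ks}\Bigr),
\end{equation*}
using $\omega(p^k)=1$ for all $k\geq 1$.

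Let $\kappa$ denote the exponent featured in the admissibility of $\alpha$, so that $F(s,0)$ behaves locally like $\zeta(s)^\kappa$ times a function holomorphic on a neighborhood of $\{\operatorname{Re}(s)\geq 1\}$. The local comparison $1+\e^z\alpha(p)p^{-s}\approx(1-p^{-s})^{-\kappa\e^z}$, valid whenever $\alpha(p)\approx\kappa$, suggests that $F(\cdot,z)$ is naturally modeled by $\zeta(\cdot)^{\kappa\e^z}$. I would next verify that the \emph{admissible++} condition is precisely the quantitative strengthening under which the renormalized Euler product $F(s,z)\,\zeta(s)^{-\kappa\e^z}$ is holomorphic in $s$ on a neighborhood of $\{\operatorname{Re}(s)\geq 1\}$, jointly entire in $z\in\mathbb{C}$, and subject to the polynomial growth bounds in $|\operatorname{Im}(s)|$ required by Selberg--Delange, uniformly for $z$ on every compact subset of $\mathbb{C}$. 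Feeding this into Selberg--Delange yields
\begin{equation*}
S(x,z) = \frac{C(z)}{\Gamma(\kappa\e^z)}\,x\,(\ln x)^{\kappa\e^z-1}\bigl(1+o(1)\bigr),
\end{equation*}
with $C(z)$ the value at $s=1$ of the renormalized Euler product and the remainder uniform for $z$ on compacts.

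Dividing by $S(x,0)$ and writing $(\ln x)^{\kappa(\e^z-1)} = \exp(\lambda_x(\e^z-1))$ with $\lambda_x := \kappa\ln\ln x$ then gives
\begin{equation*}
\mathbb{E}\bigl[\e^{z\omega(N_{x,\alpha})}\bigr] = \exp\bigl(\lambda_x(\e^z-1)\bigr)\,\Psi(z)\,(1+o(1)),\qquad
\Psi(z) := \frac{\Gamma(\kappa)\,C(z)}{\Gamma(\kappa\e^z)\,C(0)},
\end{equation*}
which is precisely mod-Poisson convergence; the limit function $\Psi$ is entire in $z$ since $1/\Gamma$ is entire and $C$ is an Euler product converging locally uniformly on $\mathbb{C}$.

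The principal obstacle I anticipate is justifying the joint analyticity and uniform polynomial growth of $F(s,z)\,\zeta(s)^{-\kappa\e^z}$ for $z$ in arbitrary compact subsets of $\mathbb{C}$. The factor $\e^z$ is unbounded in $\operatorname{Re}(z)$, which amplifies both the local discrepancy between $1+\e^z\alpha(p)p^{-s}$ and $(1-p^{-s})^{-\kappa\e^z}$ and the tail contribution of $\alpha(p^k)$ for $k\geq 2$. The role of the \emph{admissible++} axioms should be to encode exactly the decay of $\alpha(p^k)$ in $k$ and the average control of $\alpha(p)-\kappa$ needed to absorb this amplification, so that the Selberg--Delange estimate applies with an error uniform in $z$ over every compact subset of $\mathbb{C}$ --- which is precisely what yields mod-Poisson convergence on the entire complex plane.
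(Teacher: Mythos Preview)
Your proposal is correct and follows essentially the same route as the paper: apply Selberg--Delange to both $S(x,z)$ and $S(x,0)$, take the quotient, and read off mod-Poisson convergence with parameters $\kappa\ln\ln x$ and limiting function $\Psi(z)=\Gamma(\kappa)C(z)/(\Gamma(\kappa\e^z)C(0))$. The paper's $\rho$ is your $\kappa$, and its $\lambda_0(\alpha_{\e^z})$ is your $C(z)/\Gamma(\kappa\e^z)$.

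The one structural difference is organizational rather than mathematical. You treat the verification that $F(s,z)\zeta(s)^{-\kappa\e^z}$ satisfies the Selberg--Delange hypotheses uniformly on compacts in $z$ as a step to be carried out inside the proof, and you correctly flag it as the principal obstacle. The paper instead isolates this as a standalone Proposition: if $f$ is admissible++$(\rho,c_0,M)$ then $f_y(n)=y^{\omega(n)}f(n)$ is admissible++$(y\rho,c_0,M(y))$ with $M(y)$ continuous in $y$. The proof of that Proposition decomposes the Euler product for $G_y(s)=\zeta(s)^{-y\rho}\sum_n f_y(n)n^{-s}$ by splitting primes into a finite set $A$ where $|F_p(s)|\ge\tfrac12$ and its complement $B$, writing $G_y=(G_1^{(B)})^y\cdot(\text{finite product over }A)\cdot\prod_{p\in B}(1+yF_p)/(1+F_p)^y$, and using the square-summability axiom of admissible++ to control the last product via a Taylor bound on $a\mapsto(1+ya)/(1+a)^y$. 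This is exactly the mechanism you anticipate when you say the admissible++ axioms should ``absorb the amplification'' coming from unbounded $\e^z$; the paper also crucially uses that admissible++ forces $\delta=1$, so that raising $G_1$ to the power $y$ does not spoil the growth bound in $|\operatorname{Im}(s)|$.
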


	A more general result can also be obtained for the family of random variables $(g(N_{x,\alpha}))_{x\in\N}$, where $g:\N\to\C$ is chosen within a broader class of additive functions, see section \ref{sect:general-additive}, specifically Theorem \ref{thm:general-additive}. However, even for mundane choices of $g$, such as choosing $g$ to be the function $\Omega$ which counts the number of prime factors of an integer with their multiplicities, the domain on which mod-Poisson convergence occurs may be smaller than $\C$. For ease of exposition, we thus mainly treat the case $g=\omega$.

	The structure of our text is as follows. After this Introduction, we first introduce the main tools from analytic number theory that we will use in order to establish Theorem \ref{thm:main-introductory} in section~\ref{sect:analytic-number-theory}. Since our main result includes the notion of mod-$\phi$ convergence, we recall its Definition in section~\ref{sect:mod-phi}. We will then prove Theorem \ref{thm:main-introductory} in section \ref{sect:main-result}. Afterwards, in section~\ref{sect:Consequences}, we derive a variety of consequences for the $\omega(N_{x,\alpha})$, namely a Central Limit Theorem and precise estimates for large deviations.  
	
	\section{Main result}\label{sect:results}
    In this section, we discuss the main result of this text. First, the notion of admissible++ functions, namely those multiplicative functions for which the main results hold, will be introduced in section~\ref{sect:analytic-number-theory}. We then recall the Definition of mod-$\phi$ convergence in section~\ref{sect:mod-phi}. Finally, we are in a position to give a precise statement of the main result in section~\ref{sect:main-result}.
    
    \subsection{Admissible functions}\label{sect:analytic-number-theory}
    The aim of this text, initially outlined in Theorem~\ref{thm:main-introductory} and detailed in section~\ref{sect:results}, is to prove mod-Poisson convergence for the number of prime factors of a random integer whose distribution is that given by a multiplicative function, as described in the Introduction.
    
    In order to do so, we need to establish asymptotic estimates for the moment generating function of the number of prime factors of that random variable. We will use the Selberg--Delange Method from analytic number theory, laid out in detail in \cite{Tenenbaum}, which establishes conditions under which precise asymptotic estimates for the sum $\sum_{k=1}^x f(k)$ as $x\to\infty$ for a multiplicative function $f$ can be made. 
    
    We now define the class of \emph{admissible} functions, which we need to state the results from \cite{Tenenbaum}
    \begin{definition}[Admissibility of functions]\label{def:admissible-multiplicative-functions}
        Let $f:\N\to\C$ be a function. We say that $f$ is \emph{admissible with average value $\rho\in\C$} if and only if there exist $c_0,M\in(0,\infty)$ and a $\delta\in(0,1]$ such that the function 
        \begin{equation}\label{eq:def-G}
            G(s)\define \sum_{n\in\N} \frac{f(n)}{n^s} \zeta(s)^{-\rho},
        \end{equation}
        with $\zeta$ denoting the Riemann Zeta function, is well-defined for some $s\in\C$,\footnote{Note that convergence of the Dirichlet series $\sum_{n\in\N} \frac{f(n)}{n^s}$ for some $s\in\C$ implies convergence on the entire half-plane to the right of $s$, see \cite[Section 9.12]{Titchmarsh}.} and can be continued to an analytic function on the domain\footnote{For a complex number $s$, we denote its real part by $\operatorname{Re}(s)$ and its imaginary part by $\operatorname{Im}(s)$.}
        \begin{equation}\label{eq:domain}
            \set*{s\in\C: \operatorname{Re}(s)\ge 1 - \frac{c_0}{1+\max\set{0, \ln\abs{\operatorname{Im}(s)}}}},
        \end{equation}
        and, on this domain, satisfies the inequality
        \begin{equation}\label{eq:inequality}
            \abs{G(s)}\le M(1+\abs{\operatorname{Im}(s)})^{1-\delta}.
        \end{equation}
    \end{definition}
    A function that is both admissible and multiplicative will be called an \emph{admissible multiplicative function}.

    \begin{example}[Examples of admissible functions]\label{example:admissible}
        The prototypical example of an admissible multiplicative function with average value $\rho$ is the multiplicative function $\tau_\rho$ whose Dirichlet series is the $\rho$th power of the Riemann Zeta function, that is,
        \begin{equation*}
            \sum_{n\in\N}\frac{\tau_\rho(n)}{n^s} = \zeta(s)^\rho.
        \end{equation*}
        A function is determined by its Dirichlet series whenever the latter converges for any $s\in\C$, in particular, one can show that $\tau_\rho$ satisfies
        \begin{equation*}
            \tau_\rho(p^\nu) = (\rho+\nu-1)(\rho+\nu-2)\dots(\rho-1), \qquad\text{for all }\nu\in\N, \text{ and }p \text{ prime}.
        \end{equation*}
        We thus have, by construction, that the function $G$ given by \eqref{eq:def-G} for $f=\tau_\rho$ is given by
        \begin{equation*}
            G(s)=1,
        \end{equation*}
        which can be analytically continued to the entire complex plane, and satisfies \eqref{eq:inequality} for $M=\delta=1$. 

        Another example to consider is the Euler totient function $\varphi$, mapping a natural number $n$ to the number of natural numbers less or equal than $n$ which are co-prime to $n$. In particular, we have $\varphi(p^k)= p^{k}-p^{k-1}$ for any prime $p$ and any $k\in\N$. Euler's totient function is multiplicative, and for any $s\in\C$ with $\operatorname{Re}(s)>2$, we have
        \begin{equation*}
            \sum_{n\in\N}\frac{\varphi(n)}{n^s}=\frac{\zeta(s-1)}{\zeta(s)}.
        \end{equation*}
        We see that $\varphi$ cannot be admissible, since $\zeta(s-1)$ has a pole at $s=2$. However, the multiplicative function $\tilde\varphi:n\mapsto\frac{\varphi(n)}{n}$ is admissible, since
        \begin{equation*}
            \sum_{n\in\N}\frac{\varphi(n)}{n^s} = \frac{\zeta(s)}{\zeta(s+1)},
        \end{equation*}
        so by compensating with $\rho=1$, we get
        \begin{equation*}
            \zeta(s)^{-1}\sum_{n\in\N}\frac{\varphi(n)}{n^s} = \frac{1}{\zeta(s+1)}, 
        \end{equation*}
        which is bounded on any half-plane of the form $\operatorname{Re}(s)>\varepsilon$ for a fixed $\varepsilon>0$.
    \end{example}

    We will need a stronger notion of admissibility which has the property that if a non-negative multiplicative function $f$ satisfies this stronger property, then also the multiplicative function $f_y: k\mapsto y^{\omega(k)} f(k)$ satisfies this property. We will introduce such a notion now.

    \begin{definition}[Admissibility++]\label{def:admissible++}
        Let $f:\N\to\C$ be a multiplicative function. We will say that $f$ is \emph{admissible++$(\rho, c_0, M)$} for $\rho\in\C$, $c_0\in(0,1)$, $M\in(0,\infty)$ if and only if $f$ is admissible with average value $\rho$ for those constants $c_0, M$, as well as for $\delta=1$ in Definition~\ref{def:admissible-multiplicative-functions}, and furthermore if there exists an $s\in\C$ such that 
        \begin{equation}\label{eq:Dirichlet-absolute-convergence}
            \sum_{n\in\N}\abs*{\frac{f(n)}{n^s}}<\infty,
        \end{equation}
        and furthermore 
        \begin{equation}\label{eq:sum-of-squares}
            \sum_{p\text{ prime}} \left(\sum_{k=1}^\infty\frac{\abs{f(p^k)}}{p^{k(1-c_0)}}\right)^2<\infty.
        \end{equation}
    \end{definition}

    \begin{remark}[Discussion of admissibility++]
        Condition \eqref{eq:Dirichlet-absolute-convergence} guarantees that the Dirichlet series of $f$ converges absolutely on the half-plane to the right of $s$, and furthermore that $f$ can be written there as an Euler product, see \cite[Section 9.13]{Titchmarsh} and \cite[Theorem 1.3]{Tenenbaum}. The idea behind admissibility++ is that condition \eqref{eq:sum-of-squares} guarantees that the Euler product for $f_y$ instead of $f$ can be extended to the domain \eqref{eq:domain}, and that a bound similar to \eqref{eq:inequality} is still satisfied. 
    \end{remark}

    \begin{remark}[The condition that $\delta=1$]
        In our proof of Proposition~\ref{lem:admissible-function-classes}, a term used to bound $\sum_{n\in\N}\frac{f_y(n)}{n^s}\zeta(s)^{-y\rho}$ involves the $y$-th power of $G$ as defined in \eqref{eq:def-G}, so the inequality \eqref{eq:inequality} would turn into
        \begin{equation*}
            \abs*{\sum_{n\in\N}\frac{f_y(n)}{n^s}\zeta(s)^{-y\rho}}\le M(y) (1+\abs{\operatorname{Im}(\delta)})^{(1-\delta)\operatorname{Re}(y)}
        \end{equation*}
        for some constant $M(y)$. The exponent $(1-\delta)\operatorname{Re}(y)$ makes it so that in the proof of our main result, Theorem~\ref{thm:main}, we may not obtain mod-Poisson convergence on the entire complex plane. We therefore restrict our analysis to the case $\delta=1$. With a more careful approach, one may be able to formulate a more general result, where one obtains mod-Poisson convergence only on a subset of the complex plane for $\delta<1$. 
    \end{remark}

    \begin{remark}[Is the notion of admissibility++ necessary?]
         We would ideally like to work directly with admissibility instead of admissibility++. However, it was not clear to me whether admissibility of a multiplicative function $f$ implies admissibility of the function $f_y$. Therefore, the notion of admissibility++ was introduced here.
    \end{remark}
    
    \begin{remark}[Uniformity of the convergence in \eqref{eq:sum-of-squares}]\label{rem:uniformity Fp}
        Note that for every $s\in\C$ with $\operatorname{Re}(s)\ge 1-c_0$, we have
        \begin{equation*}
            0\le\frac{\abs{f(p^k)}}{\abs{p^{k s}}}\le\frac{\abs{f(p^k)}}{p^{k(1-c_0)}}.
        \end{equation*}
        In particular, it follows that condition \eqref{eq:sum-of-squares} implies uniformly absolute convergence of $\sum_{p\text{ prime}} F_p(s)^2$ for all $s\in\C$ which satisfy $\operatorname{Re}(s)\ge 1-c_0$.
    \end{remark}
    
	We now observe that, given an admissible++$(\rho, c_0, M)$ multiplicative function $f$, we obtain another admissible++ multiplicative function by multiplying $f$ with the function mapping $k$ to $y^{\omega(k)}$ for a fixed $y\in\C$.

	 \begin{proposition}[Classes of admissible functions]\label{lem:admissible-function-classes}
		Let $f:\N\to\C$ be a multiplicative function that is admissible++$(\rho, c_0, M)$ for $\rho, c_0, M$ as in Definition \ref{def:admissible++}. Then, for any $y\in\C$, the function $f_y:\N\to\C$ defined by $f_y(n)=y^{\omega(n)} f(x)$ is a multiplicative function that is
        \begin{equation*}
            \text{admissible++}(y\rho, c_0, M(y)),
        \end{equation*}
        where the $M(y)$ can be chosen to depend continuously on $y$.
	\end{proposition}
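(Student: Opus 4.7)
The plan is to verify each of the three defining conditions of admissibility++ for $f_y$. Multiplicativity of $f_y$ is immediate from $\omega(nm)=\omega(n)+\omega(m)$ for coprime $n,m$ combined with the multiplicativity of $f$. The side conditions \eqref{eq:Dirichlet-absolute-convergence} and \eqref{eq:sum-of-squares} for $f_y$ follow quickly from those for $f$: since $\omega(p^k)=1$ whenever $k\geq 1$, one has the pointwise identity $\abs{f_y(p^k)}=\abs{y}\cdot\abs{f(p^k)}$, so \eqref{eq:sum-of-squares} for $f_y$ is \eqref{eq:sum-of-squares} for $f$ multiplied by the continuous-in-$y$ constant $\abs{y}^2$; and $\sum_{n\in\N}\abs{f_y(n)}/n^s=\prod_p(1+\abs{y}\sum_{k\geq 1}\abs{f(p^k)}/p^{ks})$ converges for $\operatorname{Re}(s)$ sufficiently large by \eqref{eq:Dirichlet-absolute-convergence} applied to $f$.

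\noindent\textbf{Main task.} The substantive step is to show that
$$G_y(s):=\zeta(s)^{-y\rho}\sum_{n\in\N}\frac{f_y(n)}{n^s},$$
initially defined on a half-plane where the Dirichlet series converges absolutely, extends analytically to the admissibility domain \eqref{eq:domain} with $\abs{G_y(s)}\leq M(y)$ for some $M(y)$ depending continuously on $y$. Since $f_y(p^k)=yf(p^k)$ for $k\geq 1$, and using the principal-branch Euler product $\zeta(s)^{-y\rho}=\prod_p(1-p^{-s})^{y\rho}$, one has on the half-plane of absolute convergence
$$G_y(s)=\prod_p(1+yg_p(s))(1-p^{-s})^{y\rho},\qquad g_p(s):=\sum_{k\geq 1}\frac{f(p^k)}{p^{ks}}.$$
I will extend this to the whole domain by comparison with the analogous product $G(s)=\prod_p(1+g_p(s))(1-p^{-s})^{\rho}$ attached to $f$, which by admissibility of $f$ (with $\delta=1$) is analytic on the domain and bounded by $M$.

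\noindent\textbf{Taylor comparison.} By Remark~\ref{rem:uniformity Fp}, $\abs{g_p(s)}\to 0$ uniformly on the domain and $\sum_p\abs{g_p(s)}^2$ converges uniformly there. Choose $P(y)\in\N$ continuously in $y$ so that $\abs{yg_p(s)},\abs{g_p(s)}\leq 1/2$ for every $p\geq P(y)$ and every $s$ in the domain. Using principal branches of the logarithm, the algebraic identity
$$\log(1+yg_p(s))-y\log(1+g_p(s))=\sum_{k\geq 2}\frac{(-1)^{k+1}(y^k-y)}{k}\,g_p(s)^k$$
is bounded in modulus by $C(y)\abs{g_p(s)}^2$ with $C(y)$ continuous in $y$, so
$$\Phi_y(s):=\sum_{p\geq P(y)}\bigl[\log(1+yg_p(s))-y\log(1+g_p(s))\bigr]$$
converges absolutely and uniformly on the domain to a bounded analytic function. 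Exponentiating term by term yields, initially on the half-plane of absolute convergence, the factorisation
$$G_y(s)=\prod_{p<P(y)}(1+yg_p(s))(1-p^{-s})^{y\rho}\cdot\widetilde G(s)^y\cdot\exp\Phi_y(s),$$
where $\widetilde G(s):=G(s)\Big/\prod_{p<P(y)}(1+g_p(s))(1-p^{-s})^{\rho}$.

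\noindent\textbf{Main obstacle.} The delicate step I expect to spend most care on is giving an unambiguous analytic meaning to $\widetilde G(s)^y$ on the whole admissibility domain for general complex $y$: $G$ need not be non-vanishing there, and a complex power requires a consistent branch choice. My approach is to exploit the fact that $P(y)$ has been chosen so that every tail factor $L_p(s):=(1+g_p(s))(1-p^{-s})^\rho$ with $p\geq P(y)$ lies uniformly in a small disk around $1$, where the principal branch of $\log L_p(s)$ is analytic; combined with the cancellation structure of the Selberg--Delange framework (the "$\log\zeta$-part" of $\sum_p\log L_p$ is absorbed by the $\zeta^\rho$ normalisation), this produces a well-defined analytic branch of $\log\widetilde G$ on simply connected subregions of the domain, and hence $\widetilde G^y:=\exp(y\log\widetilde G)$. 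Each factor in the factorisation is then analytic on the domain and admits an explicit bound continuous in $y$: the finite product over $p<P(y)$ is bounded on any compact subset of the domain, $\exp\Phi_y$ is bounded by $\exp(C'(y))$, and $\widetilde G(s)^y$ is bounded by a constant depending continuously on $y$ (using $\abs{G(s)}\leq M$ and the explicit finite denominator). Multiplying yields the desired extension and bound $M(y)$.
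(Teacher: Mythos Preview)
Your approach is essentially the paper's: both write $G_y(s)=\prod_p(1-p^{-s})^{y\rho}(1+yF_p(s))$, split the primes into a finite ``bad'' set (your $p<P(y)$, the paper's set $A=\{p:\abs{F_p(s)}\ge\tfrac12\}$) and a tail, use the second-order Taylor expansion of $a\mapsto(1+ya)/(1+a)^y$ to control the tail ratio via $\sum_p\abs{F_p(s)}^2$, and reduce to the $y$-th power of the tail part of $G_1$. The paper's decomposition \eqref{eq:Gy-decomposition} is exactly your factorisation with $\widetilde G=G_1^{(B)}$ and $\exp\Phi_y=\prod_{p\in B}(1+yF_p)/(1+F_p)^y$.

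On the obstacle you flag: the paper does not handle the complex power $(G_1^{(B)})^y$ any more carefully than you do---it simply bounds $\abs{G_1^{(B)}}$ via $G_1^{(B)}=G_1/G_1^{(A)}$ and moves on, without discussing branches or possible zeros of $G_1^{(B)}$ on the domain. Your proposed resolution through $\sum_{p\ge P}\log L_p$ is natural but note that admissibility++ only gives the analytic continuation of $G_1$ together with $\sum_p\abs{F_p}^2<\infty$; it does not directly guarantee that the tail sum $\sum_p\log L_p(s)$ (whose first-order part is $\sum_p(F_p(s)-\rho p^{-s})$) converges throughout the domain \eqref{eq:domain}, so the ``cancellation structure'' you invoke would need to be made explicit. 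In short, your argument matches the paper's in both strategy and level of rigour, and you have correctly located the one genuinely delicate step.
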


    \begin{proof}
        First, recall that 
        \begin{equation*}
            \omega(n)=O\left(\frac{\ln n}{\ln \ln n}\right)
        \end{equation*}
        as $n\to\infty$. Therefore, if $\sum_{n\in\N}\abs*{\frac{f(n)}{n^s}}<\infty$ for some $s\in\C$, the same will be true for $f_y$ instead of $f$ if we increase the real part of $s$ by $\varepsilon>0$. 

        Second, 
        \begin{equation*}
            \sum_{p\text{ prime}}\left(\sum_{k=1}^\infty \frac{\abs{f_y(p^k)}}{p^{k(1-c_0)}}\right)^2 = \abs{y}^2\sum_{p\text{ prime}}\left(\sum_{k=1}^\infty \frac{\abs{f(p^k)}}{p^{k(1-c_0)}}\right)^2<\infty.
        \end{equation*}
   
        Third, for all $s$ which lie in the half-plane to the right of that complex number for which the Dirichlet series of $f$ converges absolutely, we can expand out the Euler product
        \begin{equation}\label{eq:Gy-Euler}
            G_y(s)\define\zeta(s)^{-y\rho} \sum_{n\in\N} \frac{f_y(n)}{n^s} = \prod_{p\text{ prime}}(1-p^{-s})^{y\rho} (1+yF_p(s)),
        \end{equation}
        where
        \begin{equation*}
            F_p(s)\define\sum_{k=1}^\infty \frac{f(p^k)}{p^{ks}}.
        \end{equation*}
        If $1+y F_p(s)=0$ for any prime $p$, then this product is $0$ and there are no convergence issues. We thus assume from now on that $1+yF_p(s)\neq 0$ for all primes $p$. We split this product into two parts: The set $A$ of those primes $p$ for which $\abs{F_p(s)}\ge\frac 12$, and the set $B$ of and those primes $p$ for which $\abs{F_p(s)}<\frac 12$. Let us also write
        \begin{multline*}
        G_1(s)\define \prod_{p\text{ prime}} (1-p^{-s})^\rho (1+F_p(s)) \\= \prod_{p\in A} (1-p^{-s})^\rho (1+F_p(s))  \prod_{p\in B} (1-p^{-s})^\rho (1+F_p(s))\define G_1^{(A)}(s) G_1^{(B)}(s).
        \end{multline*}

        Then
        \begin{equation}\label{eq:Gy-decomposition}
            G_y(s) =(G_1^{(B)}(s))^y \prod_{p\in A}(1-p^{-s})^{y\rho}(1+y F_p(s)) \prod_{p\in B} \frac{1+y F_p(s)}{(1+F_p(s))^y} 
        \end{equation}
        By assumption, $\sum_{p\text{ prime}} \abs{F_p(s)}^2$ converges whenever $\operatorname{Re}(s)\ge 1-c_0$, thus in particular $F_p(s)\to 0$ as $p\to\infty$, which implies that the set $A=A(s)$ is finite. Furthermore, by Remark \ref{rem:uniformity Fp}, the convergence of $\sum_{p\text{ prime}}\abs{F_p(s)}^2$ is uniform in $s$, so that the union of all the $A(s)$ for $s$ in the half-plane to the right of $1-c_0$ is finite. Thus the product $\prod_{p\in A} (1-p^{-s})^\rho (1+F_p(s))$ is uniformly-in-$s$ a finite product and can be bounded uniformly-in-$s$ by a constant which depends continuously on $y$.

        Similarly, recalling that $\abs{G_1(s)}\le M$ and that $G_1^{(B)}(s) = \frac{G_1(s)}{G_1^{(A)}(s)}$, with $G_1^{(A)}(s)$ being a uniformly-in-$s$ finite product, we can uniformly-in-$s$ bound $G_1^{(B)}(s)$ by a constant that depends continuously on $y$ and $M$. 
        
        Consider now the function $f:\C\setminus\set{-1}\to\C, a\mapsto\frac{1+ya}{(1+a)^y}$. We have $f(0)=1, f'(0)=0, f''(0)=y(1-y)$. Therefore, by Taylor's Theorem, 
        \begin{equation*}
            \abs*{\frac{1+ya}{(1+a)^y}-1 - \frac{a^2}2 y (1-y)}\le C\abs{a}^3
        \end{equation*}
        for all $a$ with $\abs{a}\le\frac 12$ and a constant $C$ that depends continuously on $y$. Since we have chosen the set $B$ to be comprised of those $p$ for which $\abs{F_p(s)}\le\frac 12$, we conclude
        \begin{multline*}
            \prod_{p\in B} \frac{1+y F_p(s)}{(1+F_p(s))^y} =\prod_{p\in B} \left(1+F_p(s)^2 y(1-y)+O_{p\to\infty}(F_p(s)^3)\right)\\\le\exp\left(\abs{y(1-y)}\sum_{p\text{ prime}} \abs{F_p(s)}^2+O_{p\to\infty}(F_p(s)^3)\right),
        \end{multline*}
        where the constant implicit in the $O_{p\to\infty}$ depends continuously on $y$. 

        In conclusion, we have convergence of the Euler product \eqref{eq:Gy-Euler}, since we have shown convergence of every term in \eqref{eq:Gy-decomposition}. Furthermore, we can uniformly-in-$s$ bound every term in \eqref{eq:Gy-decomposition} by a constant which depends continuously on $y$ and $M$. In particular, we can define $G_y$ for those $s$ in the domain \eqref{eq:domain} even if the Dirichlet series $\frac{f_y(n)}{n^s}$ is not convergent for all those $s$. This concludes the proof that $f_y$ is admissible for $\delta=1$, the $c_0$ as given, and a constant $M(y)$ that depends continuously on $y$ and $M$. 

        Following our argument, we see that the convergence of our constructed Euler product \eqref{eq:Gy-decomposition} is locally uniform. Since the locally uniform convergence of complex analytic functions is again complex analytic, we have established admissibility of $f_y$.
    \end{proof}

	If a multiplicative function $f$ is admissible and its modulus is bounded by an admissible function, we can get precise asymptotic estimates as $x\to\infty$ on its running sums $\sum_{k=1}^x f(k)$, as established by the following result.
    \begin{theorem}[{Corollary of \cite[Theorem II.5.2]{Tenenbaum}}]\label{thm:Tenenbaum-main}
        Let $f:\N\to\C$ be an admissible function with average value $\rho$. Moreover, assume that there exists an admissible function $\tilde f:\N\to[0,\infty)$ such that $\abs{f}\le \tilde f$. Then
        \begin{equation}\label{eq:main-asymptotic}
            \sum_{k=1}^x f(k) =  x (\ln x)^{\rho-1}\left(\lambda_0(f)+O\left(\frac{1}{\ln x}\right)\right), \qquad\text{as }x\to\infty,
        \end{equation}
        where $\lambda_0$ is well-defined through the following formula if $\rho\in\C\setminus\Z_{\le 0}$
        \begin{equation}\label{eq:lambda-0}
            \lambda_0(f)\define\frac{1}{\Gamma(\rho)}\prod_{p\text{ prime}}\left(\left(1-\frac 1p\right)^\rho\sum_{k=0}^\infty \frac{f(p^k)}{p^{k}}\right),
        \end{equation}
        and where $\lambda_0(f)\define 0$ for $\rho\in\Z_{\le 0}$.

        The constant implicit in the $O$ of \eqref{eq:main-asymptotic} depends only on $c_0, \delta$ from Definition \ref{def:admissible-multiplicative-functions}, as well as locally uniformly on $\rho$ and $M$ from Definition \ref{def:admissible-multiplicative-functions}.\footnote{Locally uniformly means that for all $\rho$ and $M$ varying in a compact set, we can pick the same implicit constant.}
    \end{theorem}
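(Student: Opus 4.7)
The plan is to deduce this statement directly from Tenenbaum's Theorem II.5.2, which provides a full asymptotic expansion of the form
\begin{equation*}
\sum_{k \le x} f(k) = x(\ln x)^{\rho-1} \sum_{j=0}^{N} \frac{\lambda_j(f)}{(\ln x)^j} + O\!\left(\frac{x(\ln x)^{\rho-1}}{(\ln x)^{N+1}}\right)
\end{equation*}
under the Selberg--Delange hypotheses, and to truncate it at $N=0$. First I would verify that admissibility with average value $\rho$ is precisely Tenenbaum's hypothesis: the decomposition $\sum_n f(n)/n^s = \zeta(s)^\rho G(s)$ with $G$ analytic in the Hankel-type region \eqref{eq:domain} and satisfying the bound \eqref{eq:inequality} is exactly the input to the Selberg--Delange machinery. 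The role of the auxiliary admissible majorant $\tilde f$ is to supply a matching upper bound for $\sum_n |f(n)|/n^s$ in the same region, which Tenenbaum uses to control the tail when shifting the contour in Perron's formula; without such a majorant one could not bound $\sum_{n>x} f(n)/n^s$ effectively in the truncated Perron integral.

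Second, I would identify the leading coefficient. Tenenbaum's proof produces the first term of the expansion as a Hankel contour integral around $s=1$, giving $\lambda_0(f) = G(1)/\Gamma(\rho)$. Using the absolute convergence hypothesis (analogous to \eqref{eq:Dirichlet-absolute-convergence}) for $\operatorname{Re}(s)$ sufficiently large, $\sum_n f(n)/n^s$ admits an Euler product there, and combined with $\zeta(s)^{-\rho} = \prod_p(1 - p^{-s})^\rho$ this yields
\begin{equation*}
G(1) = \prod_{p\text{ prime}}\left(\left(1-\frac{1}{p}\right)^\rho\sum_{k=0}^\infty \frac{f(p^k)}{p^k}\right),
\end{equation*}
which is precisely formula \eqref{eq:lambda-0}. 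For $\rho \in \Z_{\le 0}$, the factor $1/\Gamma(\rho)$ vanishes, so there is no contribution at this order of the asymptotic expansion, consistent with the convention $\lambda_0(f) = 0$; concretely, the Hankel contour collapses because $\zeta(s)^\rho$ becomes single-valued across the branch cut emanating from $s=1$.

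The last step is the uniformity statement, which requires going through Tenenbaum's proof and tracking how each estimate depends on $c_0, \delta, M$, and $\rho$. This is where I expect most of the work to lie: the Hankel contour geometry (its aperture and radius are functions of $c_0, \delta$), the boundary estimates of $G$ (which involve $M, \delta$), and the Gamma-function factors (which involve $\rho$) all enter the error term. The main obstacle is thus bookkeeping rather than a new idea: one must confirm that all implicit constants in the Hankel argument can be chosen uniformly on compact subsets of the parameters. This works because the only possible source of blow-up is $\Gamma(\rho)$ near its poles and the contour diameter, both of which remain bounded on compact sets of $\rho$ avoiding $\Z_{\le 0}$, and on such sets the dependence on $M$ is manifestly linear.
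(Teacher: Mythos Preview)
Your proposal is correct and matches the paper's approach exactly: the paper does not give a standalone proof but simply cites \cite[Theorem II.5.2]{Tenenbaum} and remarks that one truncates the full asymptotic expansion at its first term. Your write-up is in fact more detailed than what the paper provides, since you spell out the identification $\lambda_0(f)=G(1)/\Gamma(\rho)$, the role of the majorant $\tilde f$ in the truncated Perron step, and the bookkeeping needed for the uniformity in $c_0,\delta,\rho,M$, none of which the paper makes explicit.
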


    \begin{remark}[Remark on Theorem \ref{thm:Tenenbaum-main}]
        Theorem \ref{thm:Tenenbaum-main} as stated here is a Corollary of \cite[Theorem II.5.2]{Tenenbaum}. The latter gives a full asymptotic expansion for $\sum_{k=1}^x f(k)$. However, for our purposes, the first term of that expansion is enough.
    \end{remark}

	\subsection{Recall of mod-\texorpdfstring{$\phi$}{phi} convergence}\label{sect:mod-phi}
	Mod-$\phi$ convergence, originally introduced in \cite{FMN}, is a strengthening of the notion of convergence in distribution towards a normal distribution. A special case of mod-$\phi$ convergence is mod-Poisson convergence. 
	
	As the main result of this text is the mod-Poisson convergence of the $\omega(N_{x,\alpha})$ as defined in Theorem \ref{thm:main-introductory}, we will recall here the Definition of mod-$\phi$ and mod-Poisson convergence.
	\begin{definition}[{\cite[Definition 1.1]{FMN}}]\label{def:mod-phi-convergence}
		Let $(X_n)_{n\in\N}$ be a sequence of real-valued random variables and let $c,d\in[-\infty,\infty]$ satisfy $c<d$. Assume that for $z\in\kS_{(c,d)}\define\set{z\in\C: c < \operatorname{Re} z < d}$, the moment generating functions $\phi_n(z)\define\E\left(\e^{z X_n}\right)$ are finite and well-defined. (In particular, each of the $X_n$ have finite moments of all orders.) Fix furthermore an infinitely divisible distribution $\phi$ on $\R$. By the Lévy-Khintchine formula, such a $\phi$ which has a well-defined and finite cumulant generating function
		\begin{equation*}
			\eta(z)\define\ln\left(\int_\R \e^{zx}\,\mathrm d\phi(x)\right)
		\end{equation*}
		on $\kS_{(c,d)}$. Finally, assume the existence of an analytic function $\psi:\kS_{(c,d)}\to\C$ with $0~\not\in~\psi((c,d))$ such that 
		\begin{equation}\label{eq:psi-n-convergence}
			\psi_n\define\exp(-t_n\eta)\phi_n\to\psi\quad\text{in }L^\infty_{\text{loc}}(\kS_{(c,d)})\text{ as }n\to\infty,
		\end{equation}
		where $(t_n)_{n\in\N}$ is a sequence of real numbers going to $\infty$. Under the above conditions, the $(X_n)_{n\in\N}$ are said to \emph{converge mod-$\phi$} on $\kS_{(c,d)}$ with parameters $(t_n)_{n\in\N}$ and limiting function $\psi$.
	\end{definition}

	We are now in a position to define mod-Poisson convergence.
	\begin{definition}[Mod-Poisson convergence]
		If a sequence of random variables converges mod-$\phi$, where $\phi$ is a Poisson distribution on $\R$ with parameter $1$, which means that $\eta(z)=\e^z-1$, then we say that the sequence of random variables converges \emph{mod-Poisson}.
	\end{definition}

    \subsection{Statement of main result and its proof}\label{sect:main-result}
    We are now ready to state the main result of our manuscript. The following Theorem is a more detailed rendition of Theorem~\ref{thm:main-introductory}.
    \begin{theorem}\label{thm:main}
        Let $\alpha:\N\to[0,\infty)$ be an admissible++$(\rho, c_0, M)$ function for some $\rho\in[0,\infty)$, $c_0\in(0,1)$ and $M\in(0,\infty)$. Assume that $\alpha$ is not identically equal to $0$ (which, since $\alpha$ is multiplicative, implies $\alpha(1)=1$). Let $(N_{x,\alpha})_{x\in\N}$ be a family of random variables such that
        \begin{equation*}
        	\P(N_{x,\alpha}=k) = \frac{\mathds 1_{k\in\{1,\dots,x\}} \alpha(k)}{\sum_{j=1}^x \alpha(j)}, \qquad \text{for all }k\in\N.
        \end{equation*}
        Then the family of random variables $(\omega(N_{x,\alpha}))_{x\in\R}$ converges mod-Poisson on the entire complex plane with limiting function
        \begin{equation*}
            \psi(z) = \frac{\lambda_0(\alpha_{\e^z})}{\lambda_0(\alpha)},
        \end{equation*}
        where $\alpha_y(x)\define y^{\omega(x)} \alpha(x)$ for $y\in\C, x\in\N$, and $\lambda_0(\alpha_{\e^z})$ is given by \eqref{eq:lambda-0} for $\e^z\rho\in\C\setminus\Z_{\le 0}$, and by $0$ for $\e^z\rho\in\Z_{\le 0}$. 

        The parameters of the convergence are $t_x=\rho\ln\ln x$ and the speed of convergence is $O(1/\ln x)$.
    \end{theorem}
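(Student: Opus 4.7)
The plan is to compute the moment generating function $\phi_x(z)\define\E[\e^{z\omega(N_{x,\alpha})}]$ directly from the law of $N_{x,\alpha}$, apply the Selberg--Delange asymptotic of Theorem~\ref{thm:Tenenbaum-main} separately to its numerator and denominator after twisting $\alpha$ by $\e^{z\omega(\cdot)}$, and then identify the resulting power of $\ln x$ with $\exp(t_x\eta(z))$, where $\eta(z)=\e^z-1$ is the cumulant generating function of the Poisson distribution.

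Unfolding the definition of $N_{x,\alpha}$ first gives
\begin{equation*}
\phi_x(z) = \frac{\sum_{k=1}^x \e^{z\omega(k)}\alpha(k)}{\sum_{k=1}^x \alpha(k)} = \frac{\sum_{k=1}^x \alpha_{\e^z}(k)}{\sum_{k=1}^x \alpha(k)}.
\end{equation*}
By Proposition~\ref{lem:admissible-function-classes}, for every $z\in\C$ the multiplicative function $\alpha_{\e^z}$ is admissible++$(\e^z\rho, c_0, M(\e^z))$ with $M(\e^z)$ depending continuously on $z$; this is precisely the closure property that the admissibility++ framework was tailored to provide. To apply Theorem~\ref{thm:Tenenbaum-main} to the complex-valued $\alpha_{\e^z}$ I would also note the pointwise bound $\abs{\alpha_{\e^z}(k)}=\e^{\operatorname{Re}(z)\omega(k)}\alpha(k)=\alpha_{\e^{\operatorname{Re}(z)}}(k)$, whose right-hand side is non-negative and admissible, again by Proposition~\ref{lem:admissible-function-classes} applied to the positive real $y=\e^{\operatorname{Re}(z)}$.

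Plugging Theorem~\ref{thm:Tenenbaum-main} into numerator and denominator and taking the ratio, the factor $x$ cancels and the two powers of $\ln x$ combine to give
\begin{equation*}
\phi_x(z) = (\ln x)^{\rho(\e^z-1)}\left(\frac{\lambda_0(\alpha_{\e^z})}{\lambda_0(\alpha)}+O\left(\frac{1}{\ln x}\right)\right).
\end{equation*}
Since $(\ln x)^{\rho(\e^z-1)}=\exp\bigl(\rho\ln\ln x\cdot(\e^z-1)\bigr)=\exp(t_x\eta(z))$ for $t_x\define\rho\ln\ln x$, this is exactly the mod-Poisson relation required by Definition~\ref{def:mod-phi-convergence}, with limiting function $\psi(z)=\lambda_0(\alpha_{\e^z})/\lambda_0(\alpha)$ and rate $O(1/\ln x)$.

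The main obstacle is upgrading this pointwise identity to $L^\infty_{\mathrm{loc}}(\C)$ convergence. Here I would lean on the fact, recorded in Theorem~\ref{thm:Tenenbaum-main}, that its implicit $O$-constant depends only on $c_0,\delta$ and locally uniformly on $\rho$ and $M$: because $\e^z\rho$ and $M(\e^z)$ depend continuously on $z$, they stay in a compact set as $z$ varies over any compact $K\subset\C$, making the error uniform on $K$. The domination $\abs{\alpha_{\e^z}}\le\alpha_{\e^{\operatorname{Re}(z)}}$ provides the required non-negative bounding function with parameters that vary continuously in the same way. Finally, I would verify that $\psi$ is analytic on $\C$ (inherited from the analyticity of $z\mapsto\lambda_0(\alpha_{\e^z})$ via the Euler product in~\eqref{eq:lambda-0}) and that $\psi$ is non-vanishing on the real axis, which holds because for real $z$ the function $\alpha_{\e^z}$ is non-negative with $\e^z\rho>0$, so $\lambda_0(\alpha_{\e^z})$ is a convergent product of strictly positive local factors.
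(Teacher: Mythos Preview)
Your proof is correct and follows essentially the same route as the paper: express $\phi_x(z)$ as a ratio of partial sums, apply Proposition~\ref{lem:admissible-function-classes} to make $\alpha_{\e^z}$ admissible (dominating it by the non-negative $\alpha_{|\e^z|}$), invoke Theorem~\ref{thm:Tenenbaum-main} on numerator and denominator, and read off the mod-Poisson structure with local uniformity from the continuous dependence of the constants $\e^z\rho$ and $M(\e^z)$ on $z$. Your explicit check that $\psi$ is analytic and non-vanishing on $\R$ is a welcome addition that the paper's proof leaves implicit.
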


	\begin{proof}[Proof of Theorem \ref{thm:main}]
		Since $\alpha$ is admissible with average value $\rho$ by assumption and since it is non-negative, from  Theorem~\ref{thm:Tenenbaum-main}, we get
		\begin{equation}\label{eq:estimation-denominator}
			\sum_{n=1}^x \alpha(n) = x (\ln x)^{\rho-1} (\lambda_0(\alpha) + O(1/\ln x)), \qquad\text{as }x\to\infty.
		\end{equation}
	
		Now, let $y\in\C$ and let $\alpha_y$ be defined as in Lemma \ref{lem:admissible-function-classes}. Since $\alpha$ is admissible++, by Proposition \ref{lem:admissible-function-classes}, $\alpha_y$ is admissible with average value $y \rho$. We note furthermore that $\alpha_y$ is admissible for the same $c_0$ in Definition~\ref{def:admissible-multiplicative-functions} as $\alpha$. Thus, by Theorem~\ref{thm:Tenenbaum-main}, noting that $\abs{y^{\omega(n)}\alpha}=\abs{y}^{\omega(n)}\alpha$ is admissible too, we get
		\begin{equation}\label{eq:estimation-numerator}
			\sum_{n=1}^x y^{\omega(n)} \alpha(n) = x (\ln x)^{\rho y-1} \left(\lambda_0(\alpha_y) + O(1/\ln x)\right), \qquad\text{as }x\to\infty,
		\end{equation}
		where by convention we set $\lambda_0(\alpha_y)$ equal to $0$ if $\rho y\in\Z_{\le 0}$. 

        The idea of the proof is to now combine \eqref{eq:estimation-denominator} and \eqref{eq:estimation-numerator} to obtain asymptotic estimates for the moment-generating function $\E\left(y^{\omega(N_{x,\alpha})}\right)$, as will be done in \eqref{eq:main-calc} below. Note that mod-$\phi$ convergence requires not only pointwise-in-$y$ convergence, but it requires locally uniform convergence (see equation \eqref{eq:psi-n-convergence}). As already mentioned, the $c_0$ from Definition \ref{def:admissible-multiplicative-functions} are the same for $\alpha_y$ as they are for $\alpha$, and furthermore $\delta=1$ for both. The constants implicit in the $O$ can thus be chosen the same for $\alpha$ and $\alpha_y$, as long as $y$ varies in a compact set, as the constant implicit in \eqref{eq:main-asymptotic} depend only on $c_0,\delta$ and locally uniformly on the average value of the multiplicative function, as well as $M$, but the latter varies continuously depending on $y$ by Proposition~\ref{lem:admissible-function-classes}.

		We thus conclude that
		\begin{equation}\label{eq:main-calc}\begin{split}
			\E\left(y^{\omega(N_{x,\alpha})}\right) &= \frac{\sum_{k=1}^x y^{\omega(k)} \alpha(k)}{\sum_{k=1}^x \alpha(k)} \\
			&= \frac{x (\ln x)^{\rho y-1} \left(\lambda_0(\alpha_y) + O\left(1/\ln x\right)\right)}{x (\ln x)^{\rho-1}\left( \lambda_0(\alpha)+ O\left(1/\ln x\right)\right)}\\ 
			&= \frac{(\ln x)^{\rho(y-1)}}{\lambda_0(\alpha)} \left(\lambda_0(\alpha_y)+O\left(1/\ln x\right)\right)\left(1+O\left(1/\ln x\right)\right)\\ 
			&= (\ln x)^{\rho(y-1)}\frac{\lambda_0(\alpha_y)}{\lambda_0(\alpha)}\left(1+O\left(1/\ln x\right)\right),
		\end{split}\end{equation}
		where for any fixed $B\in(0,\infty)$, the constant implicit in the $O$ can be chosen uniformly for all $y\in\C$ with $\abs y\le B$.
		
		It follows that for $z\in\C$, and any $\varepsilon\in(0,\infty)$,
		\begin{equation*}
			\E\left(\e^{z\omega(N_{x,\alpha})}\right) =  \e^{\rho (\ln \ln x) (\e^z-1)} \frac{\lambda_0(\alpha_y)}{\lambda_0(\alpha)}\left(1+O\left(1/\ln x\right)\right),
		\end{equation*}
		where for any fixed $B\in(0,\infty)$, the constant implicit in the $O$ can be chosen uniformly for all $z\in\C$ for which $\lvert z\rvert\le B$.
		
		Recall that the Laplace transform of the Poisson distribution with parameter $1$ is given by
		\begin{equation*}
			z\mapsto\e^{\e^z-1}.
		\end{equation*}
		Therefore, we have mod-Poisson convergence of the $\omega(N_{x,\alpha})$ as $x\to\infty$ with limiting function 
		\begin{equation*}\psi(z)=\frac{\lambda_0(\alpha_{\e^z})}{\lambda_0(\alpha)}\end{equation*}
		and parameters $t_x = \rho\ln\ln x$ on the entire complex plane. The speed of convergence is $O\left(1/\ln x\right)$. 
	\end{proof}

	\subsection{Generalization to additive functions other than \texorpdfstring{$\omega$}{omega}}\label{sect:general-additive}
	In this section we will discuss generalizations of Theorem~\ref{thm:main} to $g(N_{x,\alpha})$, where $g$ is some additive function. Theorem~\ref{thm:main} corresponds to the choice $g=\omega$.
	
	\begin{definition}[Additive function]
		A function $g:\N\to\C$ is called \emph{additive} if, for any co-prime integers $n,m\in\N$, we have
		\begin{equation*}
			g(nm)=g(n)+g(m).
		\end{equation*}
	\end{definition}
    For example, the functions $\omega, \Omega$ (see Example \ref{example:Omega}), the logarithm restricted to natural numbers and the function mapping an integer to the sum of the distinct prime divisors of that integer are additive functions.
	In order to establish mod-Poisson convergence of $g(N_{x,\alpha})$ for an additive function $g$, we need asymptotic estimates as $\N\ni x\to\infty$ for the sum
	\begin{equation*}
		\sum_{n=1}^x y^{g(n)}\alpha(n) 
	\end{equation*}
	for $y$ in at least a subset of the complex plane. In principle, it would be best if we could get such asymptotic estimates for all $y\in\C$, however, as we will see in Example \ref{example:Omega} below, even for fairly mundane choices of $g$, such as choosing $g=\Omega$, where $\Omega$ is the function counting the number of prime factors of an integer with multiplicity, will lead to $n\mapsto y^{g(n)} \alpha(n)$ being non-admissible for some $y\in\C$.
	
	We briefly discuss how to adjust the proof of Theorem~\ref{thm:main} to this more general case. In order to obtain \eqref{eq:estimation-denominator} as well as \eqref{eq:estimation-numerator}, we need that $n\mapsto y^{g(n)} \alpha(n)$ is admissible for all $y$ of interest, which for mod-$\phi$ convergence means for all $y$ in some set $\set{\exp(z):z\in\kS_{(c,d)}}$, borrowing notation from Definition \ref{def:mod-phi-convergence}. We furthermore need that the constant implicit in the $O$ obtained from Theorem~\ref{thm:Tenenbaum-main} varies locally uniformly with $y$. 
  	
	We summarize the previous discussion in the following Theorem.
	\begin{theorem}[Main result, most general version]\label{thm:general-additive}
		Let $\alpha:\N\to[0,\infty)$ be a multiplicative function which is not identically equal to $0$. Furthermore, let $c,d\in[-\infty,\infty]$ satisfy $c<d$, and let $\mathcal S_{(c,d)}\subset\C$ denote the set $\set{z\in\C:c<\operatorname{Re}z<d}$. Fix an additive function $g:\N\to[0,\infty)$ and assume that all of the following hold:
        \begin{enumerate}
            \item there exist $\rho\in[0,\infty)$, $\delta\in(0,1]$ and $c_0\in(0,1)$, as well as a function $M:\kS_{(c,d)}\to(0,\infty)$ such that for every $z\in\kS_{(c,d)}$, the function $\alpha_{\e^z}:\N\to\C, n\mapsto \e^{zg(n)} \alpha(n)$, is admissible for that $\delta, c_0$ and for $M=M(z)$ with average value $\rho \e^z$;
            \item for any compact set $C\subset\kS_{(c,d)}$, $\sup_{z\in C}M(z)<\infty$.
        \end{enumerate}
        Let $(N_{x,\alpha})_{x\in\N}$ be a family of random variables such that
        \begin{equation*}
        	\P(N_{x,\alpha}=k) = \frac{\mathds 1_{k\in\{1,\dots,x\}} \alpha(k)}{\sum_{j=1}^x \alpha(j)}, \qquad \text{for all }k\in\N.
        \end{equation*}
        Then the family of random variables $(g(N_{x,\alpha}))_{x\in\R}$ converges mod-Poisson on $\kS_{(c,d)}$ with limiting function
        \begin{equation*}
            \psi(z) = \frac{\lambda_0(\alpha_{\e^z})}{\lambda_0(\alpha)},
        \end{equation*}
        where $\lambda_0(\alpha_{\e^z})$ is given by \eqref{eq:lambda-0} for $\e^z\rho\in\C\setminus\Z_{\le 0}$, and by $0$ for $\e^z\rho\in\Z_{\le 0}$. 

        The parameters of the convergence are $t_x=\rho\ln\ln x$ and the speed of convergence $O(1/\ln x)$.
	\end{theorem}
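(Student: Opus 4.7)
The plan is to mirror the proof of Theorem~\ref{thm:main} essentially verbatim, with the role played there by Proposition~\ref{lem:admissible-function-classes}---supplying admissibility of $\alpha_y$ together with local uniformity of the bound $M(y)$---now taken over by hypotheses (1) and (2) of the statement. No genuinely new analytical input is required; the work is purely organizational.

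First, I would apply Theorem~\ref{thm:Tenenbaum-main} to $\alpha$ itself (whose admissibility with average value $\rho$ is the $z=0$ instance of hypothesis (1), implicitly assumed to lie in $\kS_{(c,d)}$, as otherwise $\lambda_0(\alpha)$ would not be available in the denominator of the claimed limiting function) to obtain the denominator estimate
\begin{equation*}
    \sum_{n=1}^{x} \alpha(n) = x(\ln x)^{\rho-1}\bigl(\lambda_0(\alpha) + O(1/\ln x)\bigr).
\end{equation*}
Next, fixing $z\in\kS_{(c,d)}$ and writing $y=\e^z$, hypothesis (1) gives directly that $\alpha_y$ is admissible with average value $\rho y$. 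To invoke Theorem~\ref{thm:Tenenbaum-main}, which demands an admissible non-negative dominator, I would use the observation that, since $g\ge 0$ and $\alpha\ge 0$,
\begin{equation*}
    |\alpha_y(n)| = |\e^z|^{g(n)}\alpha(n) = \alpha_{\e^{\operatorname{Re}(z)}}(n),
\end{equation*}
and since $\operatorname{Re}(z)\in(c,d)$ is itself a (real) element of $\kS_{(c,d)}$, the dominator is admissible by another application of hypothesis (1). Theorem~\ref{thm:Tenenbaum-main} then yields the numerator estimate
\begin{equation*}
    \sum_{n=1}^{x} \e^{zg(n)}\alpha(n) = x(\ln x)^{\rho y -1}\bigl(\lambda_0(\alpha_y) + O(1/\ln x)\bigr),
\end{equation*}
with the convention $\lambda_0(\alpha_y)=0$ when $\rho y\in\Z_{\le 0}$.

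To lift these pointwise estimates to the locally uniform convergence demanded by Definition~\ref{def:mod-phi-convergence}, I would exploit the last clause of Theorem~\ref{thm:Tenenbaum-main}: the implicit constant there depends only on $c_0,\delta$ and locally uniformly on the average value and on $M$. Hypothesis (2) bounds $M(z)$ uniformly on any compact $C\subset\kS_{(c,d)}$, and the continuous maps $z\mapsto\rho\e^z$ and $z\mapsto\operatorname{Re}(z)$ carry $C$ to compact sets of parameter values, so a single $O$-constant works for all $z\in C$ in both the numerator and dominator asymptotics simultaneously. Dividing the numerator by the denominator exactly as in the chain~\eqref{eq:main-calc} then gives
\begin{equation*}
    \E\bigl(\e^{zg(N_{x,\alpha})}\bigr) = \e^{\rho(\ln\ln x)(\e^z-1)}\,\frac{\lambda_0(\alpha_{\e^z})}{\lambda_0(\alpha)}\,\bigl(1 + O(1/\ln x)\bigr)
\end{equation*}
locally uniformly on $\kS_{(c,d)}$. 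Recognising the prefactor as $\exp(t_x\eta(z))$ with $t_x=\rho\ln\ln x$ and $\eta(z)=\e^z-1$ the cumulant generating function of the unit-rate Poisson distribution, this is precisely mod-Poisson convergence with the claimed parameters, speed, and limiting function.

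The main obstacle, such as it is, will be bookkeeping rather than analysis: one must verify that the domination $|\alpha_y|\le\alpha_{\e^{\operatorname{Re}(z)}}$ does put us in position to apply Theorem~\ref{thm:Tenenbaum-main}, and that hypothesis (2) truly suffices to keep the error constants locally uniform in $z$ after taking the quotient of two asymptotics whose individual error constants involve possibly distinct values of $M$---namely those attached to $\alpha$, to $\alpha_{\e^z}$, and to the dominator $\alpha_{\e^{\operatorname{Re}(z)}}$.
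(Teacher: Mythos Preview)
Your proposal is correct and follows essentially the same approach as the paper. Indeed, the paper does not give a standalone proof of Theorem~\ref{thm:general-additive}; it only remarks that one adjusts the proof of Theorem~\ref{thm:main} by replacing the role of Proposition~\ref{lem:admissible-function-classes} with hypotheses (1) and (2), which is exactly what you do---and you even supply more detail than the paper, in particular the domination step $|\alpha_{\e^z}(n)|=\alpha_{\e^{\operatorname{Re}(z)}}(n)$ needed to invoke Theorem~\ref{thm:Tenenbaum-main}.
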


    We hope to illuminate this more general Theorem with the following example.

    \begin{example}[Prime factors counted with multiplicities]\label{example:Omega}
        We consider the function $g=\Omega$, with $\Omega(n)$ defined as the number of prime factors of $n\in\N$, counted with multiplicity. We choose, for simplicity, the multiplicative function $\alpha(n)=1$, $n\in\N$. For $y\in\C$ satisfying $\abs{y}<2$, the Dirichlet series
        $\sum_{n\in\N} \frac{y^{\Omega(n)}}{n^s}$ is absolutely convergent when $\operatorname{Re}(s)>1$ and we can thus expand it into the Euler product
        \begin{equation*}
            \sum_{n\in\N} \frac{y^{\Omega(n)}}{n^s}=\prod_{p\text{ prime}}\sum_{k=0}^\infty\frac{y^k}{p^{ks}}=\prod_{p\text{ prime}} \left(1-\frac{y}{p^s}\right)^{-1}.
        \end{equation*}
        In particular, choosing $\rho=y$,
        \begin{equation*}
            \zeta(s)^{-y}\sum_{n \in\N}\frac{y^{\Omega(n)}}{n^s} = \prod_{p\text{ prime}}(1-p^{-s})^y (1-yp^{-s})^{-1}.
        \end{equation*}
        This product has a pole whenever $y=p^{-s}$, which is the reason why the function $n\mapsto y^{\Omega(n)}$ is not admissible for all $y\in\C$. However, assuming that $\abs{y}\le 2^{-\operatorname{Re}s}$, we may take the logarithm to obtain
        \begin{equation*}
            \sum_{p\text{ prime}} y \ln(1-p^{-s}) - \ln(1-y p^{-s}) = \sum_{p\text{ prime}} -yp^{-s}+yp^{-s} + O_{p\to\infty}(p^{-2s}) = \sum_{p\text{ prime}}O_{p\to\infty}(p^{-2s}).
        \end{equation*}
        In particular, this is uniformly convergent on the half-plane $\set*{s\in\C:\operatorname{Re}(s)>\frac 12}$. Thus, we have admissibility on this half-plane assuming that $\abs{y}<\sqrt 2$. Furthermore, one can check that the constant $M$ in \eqref{eq:inequality} depends locally uniformly on $y$. This means that we can apply Theorem~\ref{thm:general-additive} for the strip $\kS_{(-\infty,\ln(2)/2)}$ and thus obtain mod-Poisson convergence for the $(\Omega(N_x))_{x\in\N}$, where each $N_x$ is a random variable distributed uniformly on $\set{1,2,\dots, x}$.  
        
    \end{example}

    \begin{remark}[Erd\H{o}s-Kac Theorem for $\Omega$]
        Based on the discussion in Example \ref{example:Omega} and the results of section \ref{sect:CLT}, we remark that we obtain a Central Limit Theorem for the number of prime factors with multiplicity of a uniformly randomly chosen integer in $\set{1,\dots, x}$ as $x\to\infty$, that is, we have
        \begin{equation*}
            \frac{\Omega(N_x)-\ln\ln x}{\sqrt{\ln \ln x}} \to Z\text{ in distribution as }x\to\infty,
        \end{equation*}
    	where $N_x$ denotes a uniformly on $\set{1,2,\dots, x}$ distributed random variable, and $Z$ denotes a normally distributed random variable with mean $0$ and variance $1$.
        This is a well-known variant of the classical Erd\H{o}s-Kac Theorem.
    \end{remark}
	
	\section{Consequences}\label{sect:Consequences}
	This section outlines consequences that are derived from mod-Poisson convergence. In particular, in section \ref{sect:CLT} we obtain an extended Central Limit Theorem for the $\omega(N_{x,\alpha})$, thus recovering the same result as in \cite[Theorem 1.1, Part 1]{Elboim-Gorodetsky} under slightly different conditions (see Remark~\ref{rem:conditions in Elboim--Gorodetsky}, but we furthermore obtain precise large deviation estimates in section \ref{sect:Precise-large-deviation}. 

    \begin{remark}[Consequences for additive functions]
        For ease of exposition, we will consider in the following the behavior of $\omega(N_{x,\alpha})$ (borrowing notation from Theorem~\ref{thm:main}), for which we have mod-Poisson convergence if $\alpha$ is a multiplicative function satisfying the conditions of Theorem~\ref{thm:main}. One can straightforwardly apply instead Theorem~\ref{thm:general-additive} to obtain mod-Poisson convergence, and thus the results of this section, for $g(N_{x,\alpha})$, where $g$ is a general additive function satisfying the conditions of Theorem~\ref{thm:general-additive}.
    \end{remark}
	
	\subsection{Central Limit Theorem}\label{sect:CLT}
	Mod-$\phi$ convergence is a strengthening of convergence in distribution to a normal distribution, therefore, we can formulate a Central Limit Theorem for the converging variables. The main result of this section establishes such a Central Limit Theorem for the $\omega(N_{x,\alpha})$, see Corollary~\ref{cor:CLT} for details.
	\begin{proposition}[Central Limit Theorem for random variables that converge mod-$\phi$]\label{prop:CLT}
		If $(X_n)_{n\in\N}$ is a sequence of random variables converging mod-$\phi$ with $\eta$ and $(t_n)_{n\in\N}$ as in Definition \ref{def:mod-phi-convergence}, and if $\eta''(0)\neq 0$, then the sequence of random variables $(Y_n)_{n\in\N}$ given by 
		\begin{equation*}
			Y_n\define\frac{X_n - t_n \eta'(0)}{\sqrt{t_n \eta''(0)}}
		\end{equation*}
		converges in distribution to a standard normal distribution as $n\to\infty$.
	\end{proposition}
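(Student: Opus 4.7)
The plan is to verify convergence of the moment generating functions $\E(\e^{tY_n})\to\e^{t^2/2}$ for every real $t$, and then invoke Curtiss's theorem (equivalently, specialize to purely imaginary arguments and apply Lévy's continuity theorem). First I would observe that, in order for $\eta'(0)$ and $\eta''(0)$ to make sense as complex derivatives of the analytic function $\eta$, the point $0$ must lie in the open strip $\kS_{(c,d)}$, so $c<0<d$. Fix $t\in\R$ and set $z_n\define t/\sqrt{t_n\eta''(0)}$; since $t_n\to\infty$, we have $z_n\to 0$, and in particular $z_n$ eventually lies in any prescribed compact neighborhood $K\subset\kS_{(c,d)}$ of $0$.

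Next, the mod-$\phi$ decomposition $\phi_n(z)=\exp(t_n\eta(z))\psi_n(z)$ from Definition~\ref{def:mod-phi-convergence} gives
\begin{equation*}
\E\left(\e^{tY_n}\right)=\exp(-t_n z_n\eta'(0))\,\phi_n(z_n)=\exp\left(t_n[\eta(z_n)-z_n\eta'(0)]\right)\,\psi_n(z_n).
\end{equation*}
Since $\eta$ is analytic on $\kS_{(c,d)}$ with $\eta(0)=0$ (because $\phi$ is a probability distribution, so $\int_\R\e^{0\cdot x}\,\mathrm d\phi(x)=1$), Taylor's theorem around $0$ yields $\eta(z_n)-z_n\eta'(0)=\tfrac12\eta''(0)z_n^2+O(z_n^3)$ with an error bound uniform on $K$. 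Substituting $z_n^2=t^2/(t_n\eta''(0))$ produces $t_n[\eta(z_n)-z_n\eta'(0)]\to t^2/2$, since $t_n\cdot O(z_n^3)=O(1/\sqrt{t_n})\to 0$.

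To handle the prefactor $\psi_n(z_n)$, I would note that $\psi_n(0)=\phi_n(0)/\e^{t_n\eta(0)}=1$ for every $n$, so the limit satisfies $\psi(0)=1$. Combining the locally uniform convergence $\psi_n\to\psi$ on $K$ with continuity of $\psi$ at $0$ and $z_n\to 0$ then gives $\psi_n(z_n)\to\psi(0)=1$. Assembling the three pieces yields $\E(\e^{tY_n})\to\e^{t^2/2}$, the moment generating function of a standard normal distribution, and Curtiss's theorem concludes that $Y_n$ converges in distribution to $\mathcal N(0,1)$.

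The main subtle step is the passage $\psi_n(z_n)\to 1$: pointwise convergence of the $\psi_n$ alone would not suffice, since the evaluation point $z_n$ varies with $n$, and this is precisely where the $L^\infty_{\text{loc}}$ hypothesis built into Definition~\ref{def:mod-phi-convergence} is indispensable. The rest of the argument is a careful but routine Taylor expansion, tracking the way powers of $z_n$ interact with the factor $t_n$.
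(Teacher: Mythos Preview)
Your proposal is correct and follows essentially the same route as the paper's proof: Taylor-expand $\eta$ at $0$, rewrite $\E(\e^{tY_n})$ via the mod-$\phi$ decomposition as $\exp(t_n[\eta(z_n)-z_n\eta'(0)])\psi_n(z_n)$, use $t_nz_n^3\to0$ to kill the cubic remainder, and use the locally uniform convergence $\psi_n\to\psi$ together with $\psi(0)=1$ to conclude. The only cosmetic difference is that the paper phrases the final step via L\'evy's continuity theorem on $\kS_{(c,d)}$ rather than Curtiss's theorem on real arguments, and your observation that $c<0<d$ is needed for $\eta'(0),\eta''(0)$ to be defined is a point the paper leaves implicit.
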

    \begin{remark}[Condition $\eta''(0)\neq 0$]
        Note that $\eta''(0)$ equals the variance of a random variable $X$ which is $\phi$-distributed (for details see Remark \ref{rem:strong-convexity}), thus the condition $\eta''(0)\neq 0$ is equivalent to the condition that the support of $\phi$ contains at least two distinct points. If this condition is not met, then instead $X_n-t_n \eta'(0)$ converges in distribution towards a random variable that is $\psi$-distributed, where $\psi$ is as in Definition \ref{def:mod-phi-convergence}, by a direct application of Lévy's continuity Theorem.
    \end{remark}
	Proposition \ref{prop:CLT} follows from a Taylor expansion of the cumulant generating function $\eta$, which we write out for illustrative purposes in appendix \ref{appendix:CLT}. Below is a stronger result for lattice distributions\footnote{Similar stronger results are also available for non-lattice distributions, see \cite[Theorem 4.8]{FMN}.}
	The Central Limit Theorem gives us asymptotics for the probability
	\begin{equation*}
		\P\left(X_n\ge t_n \eta'(0) + \sqrt{t_n\eta''(0)} y\right)
	\end{equation*}
	for some fixed $y\in\R$. However, we can get an extended Central Limit Theorem, where the $y$ is allowed to vary slightly with $n$, as explicated in the following result.  
	\begin{theorem}[Extended Central Limit Theorem, Corollary of {\cite[Theorem 3.9]{FMN}}]\label{thm:extended-CLT}
		Let $(X_n)_{n\in\N}$ be a sequence of random variables that converges mod-$\phi$ on any strip where $\phi$ is an infinitely divisible probability distribution on $\R$ whose support is contained in some set of the form $\gamma+\lambda\Z$ for some $\gamma\in\R, \lambda\in(0,\infty)$. Let $\eta$ and $(t_n)_{n\in\N}$ be as in Definition \ref{def:mod-phi-convergence}, and assume that $\eta''(0)\neq 0$. Then, for any sequence of real numbers $(y_n)_{n\in\N}$ with $y_n = o(t_n^{\frac 16})$ as $n\to\infty$, we have
		\begin{equation*}
			\P\left(X_n \ge t_n \eta'(0)+\sqrt{t_n \eta''(0)} y_n\right) = (1-\Phi(y_n))(1+o(1)),\qquad\text{as }n\to\infty,
		\end{equation*}
		where $\Phi(t)\define\frac{1}{\sqrt{2\pi}}\int_{-\infty}^t\e^{-\frac 12 x^2}\,\mathrm dx$ denotes the cumulative distribution function of a standard normal distribution.
	\end{theorem}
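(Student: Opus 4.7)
The plan is to adapt the classical exponential tilt (Esscher transform) argument to the mod-$\phi$ setting. Set $\sigma^2\define\eta''(0)$ and write $m_n\define t_n\eta'(0)+\sqrt{t_n\sigma^2}\,y_n$ for the threshold. For each $n$, I will introduce the tilted probability measure under which a new variable $\widetilde X_n$ has density $e^{h_n x}/\phi_n(h_n)$ with respect to the law of $X_n$, and choose the tilting parameter $h_n$ so that $\E[\widetilde X_n]=m_n$. Differentiating the mod-$\phi$ asymptotic $\phi_n(h)=e^{t_n\eta(h)}\psi(h)(1+o(1))$ shows that the mean of $\widetilde X_n$ is $t_n\eta'(h_n)+O(1)$, so inversion yields $h_n=y_n/\sqrt{t_n\sigma^2}+o(1/\sqrt{t_n})$, which tends to $0$ because $y_n=o(t_n^{1/6})$.

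Next, I would write the tail probability as
\begin{equation*}
\P(X_n\ge m_n) = \phi_n(h_n)\,e^{-h_n m_n}\int_0^\infty e^{-h_n u}\,d\nu_n(u),
\end{equation*}
where $\nu_n$ is the law of $\widetilde X_n-m_n$, and analyze the prefactor and the integral separately. The prefactor is controlled by a Taylor expansion of $\eta$ around $0$: the linear terms cancel by our choice of $h_n$, the quadratic term contributes $-y_n^2/2$, and the cubic remainder is $O(t_n h_n^3)=O(y_n^3/\sqrt{t_n})=o(1)$ precisely because of the hypothesis $y_n=o(t_n^{1/6})$ (this is where the $1/6$ threshold appears naturally). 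Combined with $\psi(h_n)\to\psi(0)=1$, which follows from continuity of $\psi$ and the normalization $\psi_n(0)=1$, this yields $\phi_n(h_n)e^{-h_n m_n}=e^{-y_n^2/2}(1+o(1))$.

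The remaining task, and the main technical obstacle, is to show that the integral equals $e^{y_n^2/2}(1-\Phi(y_n))(1+o(1))$. This is where the lattice assumption on $\phi$ is essential: I would apply Fourier inversion to the characteristic function of $\widetilde X_n$, using the mod-$\phi$ convergence on a suitable strip to expand the characteristic function near $0$ and the lattice support of $\phi$ to truncate the Fourier integral at the lattice scale. This yields a local limit theorem approximating $\nu_n$ by a Gaussian density with variance $\sim t_n\sigma^2$ (plus the lattice spacing factor), with a multiplicative error that is uniformly $1+o(1)$ on the relevant range. Substituting into the integral, completing the square in the exponent, and recognizing the resulting Riemann sum as a discretization of $\int_{y_n}^\infty\frac{1}{\sqrt{2\pi}}e^{-v^2/2}\,dv=1-\Phi(y_n)$ multiplied by $e^{y_n^2/2}$ closes the argument. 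The hard part is keeping the local-limit-theorem error uniform all the way down the half-line $[0,\infty)$ as $y_n$ grows; since our statement is quoted as a corollary of \cite[Theorem 3.9]{FMN}, in practice the proof reduces to verifying that the mod-$\phi$ convergence on every strip supplies the hypotheses required by that theorem.
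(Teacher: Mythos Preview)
The paper does not prove this statement at all: it is stated as a direct corollary of \cite[Theorem~3.9]{FMN} and then applied, with no argument given. Your closing sentence already recognizes this—within the present paper, the ``proof'' amounts to the one-line observation that mod-$\phi$ convergence on a strip containing $0$ (here, the whole plane) places one inside the hypotheses of the cited theorem.

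What you have sketched is instead an outline of how \cite[Theorem~3.9]{FMN} itself is established, via the Esscher tilt combined with a lattice local limit theorem. As a roadmap it is accurate: the choice $h_n\sim y_n/\sqrt{t_n\sigma^2}$, the cancellation of the linear term in the exponent, and the identification of the cubic remainder $t_n h_n^3\asymp y_n^3 t_n^{-1/2}$ as the origin of the $o(t_n^{1/6})$ threshold are all correct, as is the normalization $\psi(0)=1$. One small looseness: you differentiate the asymptotic $\phi_n(h)=e^{t_n\eta(h)}\psi(h)(1+o(1))$ to compute the tilted mean, but differentiating through an $o(1)$ is not automatic; in the mod-$\phi$ framework this is rescued by the locally uniform convergence of analytic functions (Cauchy's formula then controls derivatives), which you should mention. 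The step you flag as the main obstacle—uniform control of the local-limit error over the whole half-line so that the Riemann sum matches $e^{y_n^2/2}(1-\Phi(y_n))$—is indeed the technical core of the argument in \cite{FMN} and cannot be filled in with a paragraph. So your write-up is a faithful sketch of the underlying proof, but it goes well beyond anything the present paper attempts or needs.
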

	Since the Poisson distribution is infinitely divisible, has its support contained in $\Z$ and since it is not concentrated on a single point (therefore $\eta''(0)\neq 0$), we thus get the following Corollary.
	\begin{corollary}[Extended Central Limit Theorem for the $N_{x,\alpha}$]\label{cor:CLT}
		Let $\alpha:\N\to[0,\infty)$ be an admissible multiplicative function with average value $\rho\in(0,\infty)$, and let $(N_{x,\alpha})_{x\in\N}$ be a family of random variables as in Theorem \ref{thm:main}. Then, for any sequence of real numbers $(y_x)_{x\in\N}$ with $y_x = o\left((\ln \ln x)^\frac 16\right)$ as $x\to\infty$, we have, with $\Phi$ defined as in Theorem \ref{thm:extended-CLT},
		\begin{equation*}
			\P\left(\omega(N_{x,\alpha})\ge \rho\ln \ln x + y_x\sqrt{\rho\ln \ln x}\right) = (1-\Phi(y_x))(1+o(1)),\qquad\text{as }x\to\infty.
		\end{equation*}
		In particular, the random variables
		\begin{equation*}
			\frac{\omega(N_{x,\alpha})-\rho\ln\ln x}{\sqrt{\rho\ln \ln x}}
		\end{equation*}
		converge in distribution to a standard normal distribution as $x\to\infty$.
	\end{corollary}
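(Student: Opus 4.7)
The plan is to recognize that the Corollary is essentially a specialization of Theorem~\ref{thm:extended-CLT} to the mod-Poisson setting already established by Theorem~\ref{thm:main}, so the work is to verify that every hypothesis of Theorem~\ref{thm:extended-CLT} is met and then to unpack the resulting formula. First I would invoke Theorem~\ref{thm:main} (strictly speaking, under the admissible++$(\rho,c_0,M)$ hypothesis tacitly required to run Theorem~\ref{thm:main}) to conclude that $(\omega(N_{x,\alpha}))_{x\in\N}$ converges mod-Poisson on all of $\C$ with parameters $t_x=\rho\ln\ln x$, so that in particular the convergence holds on every vertical strip $\kS_{(c,d)}\subset\C$.

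Next I would verify the hypotheses on $\phi$ needed by Theorem~\ref{thm:extended-CLT}: the Poisson distribution with parameter $1$ is infinitely divisible, its support $\Z_{\ge 0}$ is contained in the lattice $0+1\cdot\Z$, and its cumulant generating function $\eta(z)=\e^z-1$ satisfies $\eta'(0)=1$ and $\eta''(0)=1\neq 0$. With these identifications in hand, the centering and scaling in Theorem~\ref{thm:extended-CLT} become $t_x\eta'(0)=\rho\ln\ln x$ and $\sqrt{t_x\eta''(0)}=\sqrt{\rho\ln\ln x}$, while the constraint $y_n=o(t_n^{1/6})$ becomes exactly $y_x=o((\ln\ln x)^{1/6})$. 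Substituting into the conclusion of Theorem~\ref{thm:extended-CLT} yields the claimed tail asymptotic
\begin{equation*}
\P\!\left(\omega(N_{x,\alpha})\ge\rho\ln\ln x+y_x\sqrt{\rho\ln\ln x}\right)=(1-\Phi(y_x))(1+o(1)).
\end{equation*}

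Finally, for the ``in particular'' statement I would specialize to constant sequences $y_x\equiv y\in\R$, which trivially satisfy $y_x=o((\ln\ln x)^{1/6})$, and obtain pointwise convergence of the survival function of the standardized variable to $1-\Phi(y)$ at every $y\in\R$; by the Portmanteau Theorem this is convergence in distribution to a standard normal. Alternatively, one can simply cite Proposition~\ref{prop:CLT} directly with $\eta'(0)=\eta''(0)=1$. The proof has no real obstacle: everything reduces to identifying the cumulants of $\mathrm{Pois}(1)$ and checking the lattice hypothesis; the substantive analytic work has already been done in establishing mod-Poisson convergence in Theorem~\ref{thm:main}, and the extended CLT is then black-boxed from \cite{FMN}.
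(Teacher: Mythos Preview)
Your proof is correct and follows essentially the same approach as the paper: the paper's argument is the single sentence preceding the Corollary, observing that the Poisson distribution is infinitely divisible, lattice-supported in $\Z$, and not concentrated on a point (so $\eta''(0)\neq 0$), whence Theorem~\ref{thm:extended-CLT} applies. Your write-up simply makes explicit the computations $\eta'(0)=\eta''(0)=1$ and the resulting identifications $t_x\eta'(0)=\rho\ln\ln x$, $\sqrt{t_x\eta''(0)}=\sqrt{\rho\ln\ln x}$, which the paper leaves to the reader.
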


    \begin{remark}[Conditions in \cite{Elboim-Gorodetsky}]\label{rem:conditions in Elboim--Gorodetsky}
        The results in \cite{Elboim-Gorodetsky} employ a more recent formulation of the Selberg--Delange Method for non-negative multiplicative functions, namely \cite[Theorem 2.1]{BretecheTenenbaum}. The conditions of \cite[Theorem 2.1]{BretecheTenenbaum} are that the multiplicative function $f$ under study satisfies 
        \begin{equation*}
            \sum_{p\le x}f(p)\ln(p) = \rho x + O\left(\frac{x}{(\ln x)^A}\right), \qquad\text{ as }x\to\infty
        \end{equation*}
        for some $A,\rho\in(0,\infty)$, which may be seen as a condition related to admissibility. Furthermore there is a second-order condition related to \ref{eq:sum-of-squares}.
    \end{remark}
	
	\subsection{Precise large deviations}\label{sect:Precise-large-deviation}
	Mod-$\phi$ convergence implies precise large deviation principles. The main result of this section will be a precise large deviation result for the $\omega(N_{x,\alpha})$, see Corollary \ref{cor:large-deviation}.
	
	In order to motivate this result, we recall first the basic theory of large deviations, most notably, Cramér's large deviation principle. 
	
	\begin{theorem}[Cramér's large deviation principle]
		Let $(Y_n)_{n\in\N}$ be a sequence of independent, identically distributed real random variables. We consider the cumulant generating function $\Lambda(t)\define\ln\E\left(\e^{t Y_1}\right)$, which we assume to be finite and well-defined for all $t\in\R$.\footnote{In particular, the random variables $Y_n$ have finite moments of all orders.} We now define the \emph{Fenchel-Legendre transform} of the cumulant generating function as
		\begin{equation*}
			\Lambda^*(s) = \sup_{t\in\R} ts - \Lambda(t), \qquad\text{for }s\in\R. 
		\end{equation*}
		Then, for all $s>\E(Y_1)$, with $X_n\define Y_1+\dots+Y_n$,
		\begin{equation*}
			\lim_{n\to\infty} \frac 1n \P(X_n \ge sn) = -\Lambda^*(s).
		\end{equation*}
	\end{theorem}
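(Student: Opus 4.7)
The plan is to prove matching upper and lower bounds for $\frac{1}{n}\ln\P(X_n\ge sn)$ (the limit in the statement being tacitly of $\frac{1}{n}\ln$, since without the logarithm the limit would be $0$) via the classical two-step Chernoff-plus-exponential-tilting argument.

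For the upper bound, I would apply Markov's inequality to the nonnegative variable $\e^{tX_n}$ for arbitrary $t\ge 0$: combined with independence of the $Y_i$, this gives $\P(X_n\ge sn)\le \e^{-n(ts-\Lambda(t))}$. Taking $\frac{1}{n}\ln$ and optimizing over $t\ge 0$ yields $\limsup_{n\to\infty}\frac{1}{n}\ln\P(X_n\ge sn)\le -\sup_{t\ge 0}(ts-\Lambda(t))$, and since $\Lambda$ is convex with $\Lambda'(0)=\E(Y_1)<s$, the function $t\mapsto ts-\Lambda(t)$ is non-decreasing at $t=0$, so this supremum over $t\ge 0$ already equals $\Lambda^*(s)$.

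For the matching lower bound, the plan is to perform an exponential change of measure that turns the rare event into a typical one. Since $\Lambda$ is finite everywhere, a dominated-convergence argument shows $\Lambda\in C^\infty(\R)$, and in the non-degenerate case there exists $t^\star>0$ with $\Lambda'(t^\star)=s$, at which the supremum defining $\Lambda^*(s)$ is attained. I would introduce the tilted probability measure $\tilde\P$ under which the $Y_i$ remain i.i.d.\ with Radon--Nikodym density $\e^{t^\star y-\Lambda(t^\star)}$ relative to the law of $Y_1$; under $\tilde\P$, each $Y_i$ has mean $\Lambda'(t^\star)=s$ and finite variance $\Lambda''(t^\star)$. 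For any $\varepsilon>0$, restricting the indicator to the window $sn\le X_n\le sn+\varepsilon n$ and reverting to $\P$ through the Radon--Nikodym derivative gives
\begin{equation*}
\P(X_n\ge sn)\ge \e^{-n(t^\star(s+\varepsilon)-\Lambda(t^\star))}\,\tilde\P(sn\le X_n\le sn+\varepsilon n),
\end{equation*}
and the right-hand probability tends to $\frac{1}{2}$ as $n\to\infty$ by the central limit theorem applied under $\tilde\P$. Taking $\frac{1}{n}\ln$ and letting first $n\to\infty$ and then $\varepsilon\downarrow 0$ yields $\liminf_{n\to\infty}\frac{1}{n}\ln\P(X_n\ge sn)\ge -(t^\star s-\Lambda(t^\star))=-\Lambda^*(s)$, matching the upper bound.

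The main obstacle is the convex-analytic bookkeeping that produces the interior tilt $t^\star$. When $s$ lies strictly between $\E(Y_1)$ and the essential supremum of $Y_1$, $\Lambda'$ is a continuous strictly increasing function that maps $\R$ onto an open interval containing $s$, and the intermediate value theorem produces $t^\star$ immediately. In the edge case where $s$ equals the essential supremum, one must instead choose $t^\star_k\to\infty$ with $\Lambda'(t^\star_k)\uparrow s$ and let $k\to\infty$ after $n\to\infty$; the degenerate case in which $Y_1$ is $\P$-almost surely constant is trivial since then $\Lambda^*$ takes values only in $\set{0,+\infty}$. Beyond this convex-analytic technicality, the argument is entirely standard and appears in any textbook on large deviations.
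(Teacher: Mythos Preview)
The paper does not actually prove Cram\'er's theorem; it is stated without proof as a classical background result, serving only to motivate the subsequent precise large deviation estimate imported from \cite{FMN}. There is therefore nothing in the paper to compare your argument against. Your proof is the standard textbook one---Chernoff's bound for the upper estimate and exponential tilting combined with the central limit theorem under the tilted law for the lower estimate---and it is correct, including your observation that the displayed limit in the statement tacitly needs a logarithm.
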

	Large deviation principles say, intuitively, that, for some sequence of random variables $(X_n)_{n\in\N}$, we have $\P(X_n\ge sn) = \exp\left(-n \Lambda^*(s) + o(n)\right)$ for $s>\E(X_1)$ fixed as $n\to\infty$. We would like to establish \emph{precise} large deviations, meaning that we want better control of the $o(n)$ term.
	
	Precise large deviations can be obtained for random variables that converge mod-$\phi$, as illustrated in the following result.
	\begin{theorem}[{\cite[Theorem 3.4]{FMN}}]\label{thm:large-deviation}
		Let $\phi$ be an infinitely divisible probability distribution on $\R$ whose support is contained in some $\gamma+\lambda\Z$ for some $\gamma\in\R$ and $\lambda\in(0,\infty)$, and let $(X_n)_{n\in\N}$ be a sequence of random variables converging mod-$\phi$ on $\kS_{(c,d)}$ with $c<0<d$ and parameters $(t_n)_{n\in\N}$ with limiting function $\psi$ (borrowing the notation from Definition \ref{def:mod-phi-convergence}). Let $\eta$ be as in Definition \ref{def:mod-phi-convergence} and assume $\eta''(0)\neq0$. Then, for $s\in(\eta'(c),\eta'(d))$,
		\begin{equation*}
			\P(X_n\ge t_n s)= \exp(-t_n \eta^*(s)) \frac{\psi(h)}{1-\e^{-h}} (1+o(1)), \qquad\text{as }n\to\infty,
		\end{equation*}
		where 
		\begin{equation*}
			\eta^*(s)\define\sup_{t\in\R} ts - \eta(t), \qquad\text{for }s\in\R,
		\end{equation*}
		and $h$ is defined through the implicit equation $\eta'(h)=s$.
	\end{theorem}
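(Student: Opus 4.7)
My plan is to use the classical Esscher (exponential) change of measure combined with a local limit theorem under the tilted law, which is the mod-$\phi$ analogue of the Bahadur--Ranga Rao method for precise large deviations of i.i.d.\ sums.

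First I would pick the tilt parameter $h\in(c,d)$ as the unique solution of $\eta'(h)=s$; existence and uniqueness follow from strict convexity of $\eta$ on $(c,d)$, which holds because $\phi$ is infinitely divisible and $\eta''(0)\neq 0$. In the regime $s>\eta'(0)$ one has $h>0$. Introducing the tilted law $\tilde{\P}_n$ via $\mathrm d\tilde{\P}_n/\mathrm d\P(X_n\in\cdot)(x)=e^{hx}/\E(e^{hX_n})$, a direct manipulation gives
\[\P(X_n\ge t_ns)=\E(e^{hX_n})\,e^{-ht_ns}\,\tilde{\E}_n\!\left(e^{-h(X_n-t_ns)}\mathds 1_{X_n\ge t_ns}\right).\]
Applying mod-$\phi$ convergence at $z=h$ yields $\E(e^{hX_n})=\psi(h)\,e^{t_n\eta(h)}(1+o(1))$, so together with the Legendre identity $\eta^*(s)=hs-\eta(h)$ at the critical point the prefactor cleanly becomes $\psi(h)e^{-t_n\eta^*(s)}(1+o(1))$.

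The second step is to analyse the remaining tilted tail expectation $\tilde{\E}_n\!\left(e^{-h(X_n-t_ns)}\mathds 1_{X_n\ge t_ns}\right)$. Expanding the tilted moment generating function via mod-$\phi$ convergence at $h+z$ for small $z$, one sees that under $\tilde{\P}_n$ the variable $X_n$ is approximately Gaussian with mean $t_ns+O(1)$ and variance $t_n\eta''(h)+O(1)$. Since $\phi$ is lattice-valued on $\gamma+\lambda\Z$, each $X_n$ lives on the same lattice, and I would then derive a local limit theorem by inverting the tilted characteristic function over its fundamental period $[-\pi/\lambda,\pi/\lambda]$. Because $h>0$, only values of $k$ with $k-t_ns=O(1)$ contribute to the tail expectation, and summing the resulting geometric series $\sum_{j\ge 0}e^{-h\lambda j}$ (after matching lattice conventions) produces the factor $\tfrac{1}{1-e^{-h}}$ appearing in the statement.

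The hard part will be the uniform local limit theorem for the tilted law. Mod-$\phi$ convergence gives very sharp control of the moment generating function on a complex neighborhood of $h$, but Fourier inversion requires bounds on the tilted characteristic function across the whole period $[-\pi/\lambda,\pi/\lambda]$ of the lattice, not just near the origin. The lattice hypothesis is essential here: it rules out additional resonances at nonzero frequencies that would otherwise spoil the Fourier inversion, and a saddle-point / Laplace analysis near $\theta=0$ is needed to extract the prefactor together with the $(1+o(1))$ error with the correct normalization. Once this local limit estimate is in place, combining it with the algebraic identities of the first step completes the proof.
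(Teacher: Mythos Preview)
The paper does not prove this theorem; it is quoted verbatim as \cite[Theorem 3.4]{FMN} and used as a black box to derive Corollary~\ref{cor:large-deviation}. There is therefore no in-paper proof to compare your proposal against.

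That said, your plan is exactly the Bahadur--Ranga Rao strategy that \cite{FMN} itself uses: tilt by the saddle point $h$ solving $\eta'(h)=s$, extract the prefactor $\psi(h)\e^{-t_n\eta^*(s)}$ from mod-$\phi$ convergence at $z=h$, and then control the residual tilted expectation via a local limit theorem obtained by Fourier inversion over the fundamental period of the lattice. So methodologically you are on target.

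Two points to flag. First, you write ``Since $\phi$ is lattice-valued on $\gamma+\lambda\Z$, each $X_n$ lives on the same lattice.'' This does \emph{not} follow from Definition~\ref{def:mod-phi-convergence} as stated here: mod-$\phi$ convergence constrains only the moment generating functions, and nothing prevents the $X_n$ from being continuous even when $\phi$ is lattice. In \cite{FMN} the lattice structure of the $X_n$ is part of the hypotheses of the relevant theorem, so you should add it explicitly rather than claim it is automatic. Second, a careful execution of your local-limit step produces an extra Gaussian normalisation $1/\sqrt{2\pi t_n\eta''(h)}$ in front of the geometric sum (this is the lattice analogue of the $1/(h\sqrt{2\pi t_n\eta''(h)})$ in the non-lattice Bahadur--Rao formula); the version quoted in the paper has suppressed this factor, so when you carry out the computation do not be surprised if your answer is sharper than the displayed statement.
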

	\begin{remark}[Well-definedness of $h$]\label{rem:strong-convexity}
		Consider $\phi,\eta$ as in Definition \ref{def:mod-phi-convergence}, and let $X$ be a $\phi$-distributed random variable. Then note that, for $z\in\R$,
		\begin{equation*}
			\eta''(z) = \frac{\E\left(X^2 \e^{z X}\right)\E\left(\e^{z X}\right) - \E\left(X\e^{zX}\right)^2}{\E\left(\e^{zX}\right)^2},
		\end{equation*}
		which is the variance of $X$ under the measure $\Q_z$ given by $\frac{\mathrm d\Q_z}{\mathrm d\P} = \frac{\e^{zX}}{\E(\e^{zX})}$. Therefore, $\eta$ restricted to $\R$ is a convex function. Furthermore, $\eta$ is strictly convex unless $\phi$ is supported on a single point.
		
		It follows that $h$ in Theorem \ref{thm:large-deviation} is well-defined and unique. Furthermore, we have $\eta^*(s) = s h - \eta(h)$ when $\eta'(h)=s$.
	\end{remark}
	\begin{remark}[The term $\frac{\psi(h)}{1-\e^{-h}}$]
		The term $\frac{\psi(h)}{1-\e^{-h}}$ in Theorem \ref{thm:large-deviation} is undefined when $h=0$. In this case, we implicitly define this term as $\psi'(0)$.
	\end{remark}
	
	\begin{corollary}[Precise large deviations for the $\omega(N_{x,\alpha})$]\label{cor:large-deviation}
		Let $\alpha:\N\to\R_+$ be an admissible multiplicative function with average value $\rho\in\C$. Then for $s\in(0,\infty)$, we have
		\begin{equation*}
			\P(N_{x,\alpha}\ge s\rho\ln\ln x) = \exp(-\rho\ln\ln x (1+s(\ln(s)-1))) \frac{\psi(\ln s)}{1-\frac 1s} (1+o(1)),\qquad\text{as }x\to\infty,
		\end{equation*}
		where, for $s=1$, we once again implicitly define $\frac{\psi(\ln s)}{1-\frac 1s}$ as $\psi'(0)$.
	\end{corollary}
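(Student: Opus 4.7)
The plan is to apply Theorem~\ref{thm:large-deviation} directly to the mod-Poisson convergence established in Theorem~\ref{thm:main}, with essentially all of the work reducing to a bookkeeping computation for the Poisson distribution. First I would verify the hypotheses of Theorem~\ref{thm:large-deviation}: the Poisson distribution with parameter $1$ is infinitely divisible, its support is contained in $\Z=0+1\cdot\Z$, its cumulant generating function $\eta(z)=\e^z-1$ satisfies $\eta''(0)=1\neq 0$, and Theorem~\ref{thm:main} provides mod-Poisson convergence of $(\omega(N_{x,\alpha}))_{x\in\N}$ on the entire complex plane (so we may take $c=-\infty$, $d=+\infty$, which in particular satisfies $c<0<d$), with parameters $t_x=\rho\ln\ln x$ and limiting function $\psi(z)=\lambda_0(\alpha_{\e^z})/\lambda_0(\alpha)$.

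Next I would specialize each quantity appearing in the conclusion of Theorem~\ref{thm:large-deviation} to the Poisson case. Since $\eta'(h)=\e^h$, the admissible range of $s$ is $(\eta'(-\infty),\eta'(+\infty))=(0,\infty)$, matching the hypothesis of the Corollary, and the implicit equation $\eta'(h)=s$ has the unique solution $h=\ln s$ (well-definedness is clear from strict convexity of $\eta$ on $\R$, as discussed in Remark~\ref{rem:strong-convexity}). The Fenchel--Legendre transform then evaluates to
\begin{equation*}
    \eta^*(s)=sh-\eta(h)=s\ln s-(s-1)=1+s(\ln s-1),
\end{equation*}
so that $t_x\,\eta^*(s)=\rho\ln\ln x\cdot(1+s(\ln s-1))$, which matches the exponent in the statement. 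Likewise, $1-\e^{-h}=1-1/s$, so the prefactor becomes $\psi(\ln s)/(1-1/s)$.

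Plugging everything into the conclusion of Theorem~\ref{thm:large-deviation} immediately gives the claimed asymptotic. The only delicate point is the boundary case $s=1$, where $h=0$ and the prefactor formally reads $0/0$; this is handled precisely by the convention introduced in the Remark on ``The term $\frac{\psi(h)}{1-\e^{-h}}$'' following Theorem~\ref{thm:large-deviation}, under which the value at $h=0$ is interpreted as $\psi'(0)$. I do not anticipate any genuine obstacle: the Corollary is a direct specialization of Theorem~\ref{thm:large-deviation} to the Poisson setting, and all of the substantive analytic work has already been carried out in establishing Theorem~\ref{thm:main}.
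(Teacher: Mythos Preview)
Your proposal is correct and is precisely the argument the paper intends: the Corollary is stated without proof and is meant as an immediate specialization of Theorem~\ref{thm:large-deviation} to the mod-Poisson convergence of Theorem~\ref{thm:main}, with the Poisson-specific computations $\eta'(h)=\e^h$, $h=\ln s$, $\eta^*(s)=1+s(\ln s-1)$, and $1-\e^{-h}=1-1/s$ exactly as you carry them out. Your verification of the hypotheses and treatment of the $s=1$ boundary case via the stated convention are also in line with the paper.
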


	\section{Examples of admissible++ functions}\label{sect:examples}
	In this section, we give examples of families of multiplicative functions which are admissible++, and for which we can thus deduce a result of mod-Poisson convergence as in Theorem \ref{thm:main}.
	
	\subsection{\texorpdfstring{$\alpha(k)=\theta^{\omega(k)}$ for a $\theta>0$}{alpha(k)=theta pow omega(k) for a theta>0}}
	A well-known class of multiplicative functions, studied also in \cite{Elboim-Gorodetsky}, is given by $\alpha(n) = \theta^{\omega(n)}$ for a parameter $\theta\in(0,\infty)$. To show that such an $\alpha$ is admissible, by Proposition~\ref{lem:admissible-function-classes}, it is enough to prove that $\alpha\equiv 1$ is admissible. However, $\alpha\equiv 1$ has the Dirichlet series $\zeta(s)$, which directly shows that this $\alpha$ is admissible with average value $1$, as discussed in Example \ref{example:admissible}. Admissibility++ holds for $\rho=1$ and any $c_0<\frac 12$.
    
    \subsection{\texorpdfstring{$\alpha(p^s)=B^s$ for a $B\in(0,2)$}{alpha(ps)=Bs for a B in (0,2)}}
    Consider now a fixed $B\in(0,2)$ and let $\alpha:\N\to(0,\infty)$ be the multiplicative function which satisfies for all $s\in\Z_{\ge 0}$ as well as all primes $p$ the identity $\alpha(p^s) = B^s$. Then we have the Euler product
    \begin{multline}\label{eq:product-alpha-power-of-B}
        \sum_{n=1}^\infty \frac{\alpha(n)}{n^s}\zeta(s)^{-\rho} = \prod_{p\text{ prime}} (1-p^{-s})^\rho \left(\sum_{k=0}^\infty \frac{B^s}{p^{ks}}\right) \\
        = \prod_{p\text{ prime}} (1-p^{-s})^\rho \frac{1}{1-\frac{B}{p^s}}=\prod_{p \text{ prime}} (1-p^{-s})^\rho \left(1+\frac{B}{p^s-B}\right).
    \end{multline}
    We note that the second term in the product has a pole when $p^s=B$. Thus, on the half-plane $\set{s\in\C: \operatorname{Re}(s)>1-c_0}$ for $c_0\in(0,1/2)$, we need to restrict ourselves to $B\in(0,2^{1-c_0})$.  
    Choosing $\rho=B$, and taking the logarithm of the right-hand side of \eqref{eq:product-alpha-power-of-B}, we get
    \begin{equation*}
        \sum_{p\text{ prime}}B \ln(1-p^{-s}) + \ln\left(1+\frac{B}{p^s-B}\right),
    \end{equation*}
    which is
    \begin{equation*}
        \sum_{p\text{ prime}} - B p^{-s} + B p^{-s} + O_{p\to\infty}(p^{-2s}).
    \end{equation*}
    Checking admissibility as well as admissibility++ is now straightforward.

    \subsection{\texorpdfstring{$\alpha(p^k)=a+O_{p\to\infty}(p^{-\varepsilon k})$ for some $\varepsilon>0$}{alpha(p)=a+O(p pow - epsilon k) for some epsilon>0}}
    We finally consider the case where for some $a>0$ and $\varepsilon>0$, we have
    $\alpha(n)=a+O(n^{-\varepsilon})$, for $n\to\infty$ restricted to the set $n\in\kP$, where 
    \begin{equation*}
    \kP\define\bigcup_{k\in\N} \set{p^k:p\text{ prime}}.
    \end{equation*}
    We assume for simplicity that $\alpha$ is non-negative. Then we note first that
    \begin{equation*}
        \sum_{n\in\N}\frac{\alpha(n)}{n^2} < \infty.
    \end{equation*}
    Second, fix a $c_0\in(0,\sfrac 12)$. Then 
    \begin{equation*}
        \sum_{p\text{ prime}}\left(\sum_{k=1}^\infty\frac{\alpha(p^k)}{p^{k(1-c_0)}}\right)^2 \le \sum_{p\text{ prime}}2\left(\left(\sum_{k=1}^\infty \frac{a}{p^{k(1-c_0)}}\right)^2+\left(\sum_{k=1}^\infty \frac{o_{p\to\infty}(1)}{p^{k(1-c_0)}}\right)^2\right),
    \end{equation*}
    which is indeed finite. 

    Finally, we consider the compensated Euler product for $\rho=a$, 
    \begin{equation*}
        \prod_{p\text{ prime}} \left(1+\sum_{k=1}^\infty\frac{a+O_{p\to\infty}(p^{-\varepsilon k})}{p^{ks}}\right)\left(1-\frac{1}{p^s}\right)^a.
    \end{equation*}
    Taking the logarithm we obtain
    \begin{equation*}
        \sum_{p\text{ prime}} \left(O_{p\to\infty}(p^{-2s})-\frac{a}{p^s}+\sum_{k=1}^\infty\frac{a+O_{p\to\infty}(p^{-\varepsilon k})}{p^{ks}}\right),
    \end{equation*}
    where the first term comes from the quadratic terms of the logarithm, while the second term comes from $(1-p^{-s})^a$ and the last term comes from the first term in the Euler product.
    The sum is convergent whenever $\operatorname{Re}(s)>\max(\frac 12, 1-\varepsilon)$. We have thus established admissibility and, together with the previous reasoning, thus admissibility++.

    \section*{Acknowledgements}
    I thank Pierre-Loic Mél\"{\i}ot and my Ph.D. advisor, Ashkan Nikeghbali, for helpful discussions that formed the foundation of this work. I also extend thanks to Thomas Lehéricy for informative insights leading up to the proof of Proposition~\ref{lem:admissible-function-classes}, and for his multiple helpful remarks on final versions of this text.

    \appendix
    \section{Proof of Proposition~\ref{prop:CLT}}\label{appendix:CLT}
    We assume $\eta''(0)\neq0$. If the cumulant generating function is finite on $\kS_{(c,d)}$, then it is analytic, 
    so in particular it has a Taylor series expansion around $0$. To second order, we have, by Taylor's Theorem,
    \begin{equation}\label{eq:taylor-eta}
        \eta(z) = \eta(0) + \eta'(0) z + \frac{\eta''(0)}2 z^2 + o(z^3)\quad\text{as }z\to0.
    \end{equation}
    We now consider the random variables
    \begin{equation}\label{eq:renormalize}
        Y_n\define\frac{X_n - t_n \eta'(0)}{\sqrt{t_n \eta''(0)}}
    \end{equation}
    Then 
    \begin{equation*}
        \E\left(\e^{z Y_n}\right) = \phi_n\left(\frac{z}{\sqrt{t_n\eta''(0)}}\right) \exp\left(-t_n\eta'(0)\frac{z}{\sqrt{t_n\eta''(0)}}\right).
    \end{equation*}
    For $z$ fixed, we now define the numbers 
    \begin{equation*}
    y_n\define\frac{z}{\sqrt{t_n\eta''(0)}}
    \end{equation*}
    and note that $y_n\to 0$ as $n\to\infty$. Since convergence of $\psi_n$ to $\psi$ is locally uniform, we conclude that 
    \begin{equation*}
        \lim_{n\to\infty}\psi_n(y_n)=\psi(0).
    \end{equation*}
    We also note that, by \eqref{eq:taylor-eta}, and $\eta(0)=0$,
    \begin{equation*}
        \psi_n(y_n)=\phi_n(y_n) \exp\left(-t_n\eta'(0)y_n - t_n \eta''(0) \frac{y_n^2}2 + o(t_n y_n^3) \right).
    \end{equation*}
    Since $\lim_{n\to\infty}t_n y_n^3 = 0$,
    \begin{equation*}
        \lim_{n\to\infty} \exp\left(-t_n\eta''(0)\frac{y_n^2}2 + o(t_n y_n^3)\right) = \exp\left(-\frac{z^2}2\right).
    \end{equation*}
    In conclusion,
    \begin{equation*}
        \lim_{n\to\infty}\E\left(\e^{z Y_n}\right) = \exp\left(\frac{z^2}2\right)\lim_{n\to\infty} \psi_n(y_n) =  \exp\left(\frac{z^2}2\right) \psi(0)
    \end{equation*}
    We now note that
    \begin{equation*}
    \psi(0) = \lim_{n\to\infty} \exp(-t_n\eta(0))\phi_n(0)=1.
    \end{equation*}
    Thus, 
    \begin{equation*}
        \lim_{n\to\infty}\E\left(\e^{z Y_n}\right) = \exp\left(\frac{z^2}2\right).
    \end{equation*}
    Since this holds for all $z\in\mathcal S_{(c,d)}$, it follows from Lévy's continuity Theorem that $Y_n\to Z$ in distribution as $n\to\infty$, where $Z$ is a normally distributed random variable with mean $0$ and variance $1$.\hfill$\square$
\end{document}